\renewcommand{\div}{\operatorname{div}}
\newcommand{\Tt}{{\mathbb{T}}}
 \newcommand{\Rr}{\mathbb R}
 \newcommand{\af}{\alpha}
 \newcommand{\ep}{\epsilon}
\newcommand{\be}{\beta}
\renewcommand{\div}{\operatorname{div}}
\newcommand{\tr}{\operatorname{Tr}}
\newtheorem{teo}{Theorem}[section]
\newtheorem{Lemma}{Lemma}[section]
\newtheorem{Corollary}{Corollary}[section]
\newtheorem{Proposition}{Proposition}[section]
\newtheorem{Assumption}{A}
\begin{document}

\title{Local regularity for mean-field games in the whole space}
\author{Diogo A. Gomes\footnote{King Abdullah University of Science and Technology (KAUST), CEMSE Division and
KAUST SRI, Uncertainty Quantification Center in Computational Science and Engineering, Thuwal 23955-6900. Saudi Arabia. e-mail: diogo.gomes@kaust.edu.sa.}, 
Edgard Pimentel
\footnote{Instituto Nacional de Matem\'atica Pura e Aplicada, IMPA. Estrada Dona Castorina, 110, 22460-320 Rio de Janeiro-RJ,
Brazil. e-mail: edgardap@impa.br.}
} 

\date{\today} 

\maketitle

\begin{abstract}
In this paper, we investigate the Sobolev regularity for mean-field games in the whole space $\Rr^d$. This is achieved by combining integrability for the solutions of the Fokker-Planck equation with estimates for the Hamilton-Jacobi equation in Sobolev spaces. To avoid the mathematical challenges due to the lack of compactness,
we prove an entropy dissipation estimate for the adjoint variable. This, together with the non-linear adjoint method, yields uniform estimates for solutions of the Hamilton-Jacobi equation in $W^{1,p}_{loc}(\Rr^d)$.
\end{abstract}

\thanks{
D. Gomes was partially supported by KAUST baseline funds, KAUST SRI, Uncertainty Quantification Center in Computational Science and Engineering and CAMGSD-LARSys through FCT-Portugal.

E. Pimentel is financed by CNPq-Brazil, grant 401795/2013-6.}

\section{Introduction}

The mathematical theory of mean-field games (MFG) formalizes the concept of Nash equilibrium, for $N$-players stochastic differential games, when $N\to\infty$. It comprises a variety of methods and techniques, which aim at investigating problems with a large number of agents. 
Introduced in the works of J-M. Lasry and P-L. Lions \cite{ll1, ll2, ll3} and M. Huang, P. Caines and R. Malham\'e, \cite{C1, C2}, these methods have known an intense research activity. Indeed, several research directions have been undertaken by various authors, see, for instance, the surveys \cite{llg2}, \cite{cardaliaguet}, \cite{achdou2013finite}, or \cite{GS}, as well as the lectures by P-L. Lions \cite{LCDF,LIMA} and the recent book by A. Bensoussan, J. Frehse and P. Yam \cite{bensoussan}.

In the present paper, we study a time-dependent MFG in the whole space $\Rr^d$. 
This MFG is defined through the following system of two partial differential equations 
in $\Rr^d\times [0, T]$:
\begin{equation}\label{mfg}
\begin{cases}
-u_t+H(x,Du)=\Delta u+g(m)\\m_t-\div(D_pH m)=\Delta m,
\end{cases}
\end{equation}
where $T>0$ is arbitrarily fixed and $u, m:\Rr^d\times [0,T]\to  \Rr$ satisfy 
the initial-terminal conditions
\begin{equation}\label{itbc}
\begin{cases}
u(x,T)=u_T(x)\\m(x,0)=m_0(x).
\end{cases}
\end{equation}
The Hamiltonian $H$ and the non-linearity $g$ satisfy a series of assumptions detailed in Section \ref{assumptions}. To illustrate our results, 
we observe that 
a model Hamiltonian for which these  hold is $$H(x,p)\,=\,a(x)\left(1+|p|^2\right)^\frac{\gamma}{2}+V(x),$$ where $\gamma\in\left(1,2\right]$ and $a,\,V\in\mathcal{C}^\infty$ and $V\leq 0$ is bounded. A typical non-linearity $g$ would be required to be non-decreasing and to satisfy
\begin{equation*}
g(m)\,=\,
\begin{cases}
m(x,t),&\;m<<1\\
m^\alpha(x,t),&\;m>> 1,
\end{cases}
\end{equation*}
for $\af>0$, interpolating in a smooth monotone way for $m$ near $1$ (see Assumption A\ref{ge}).

Existence of solutions for MFG is a matter of fundamental interest. Most of the results in the literature were investigated in the periodic setting or for smooth bounded domains, under Dirichlet or Neumann boundary conditions, see \cite{porretta2}.
In the stationary  periodic  setting, existence of weak solutions was obtained in \cite{ll1}. Smooth solutions were addressed in \cite{GM}, \cite{GPM1} and \cite{GPatVrt}
(see also \cite{GIMY}).
The stationary obstacle problem was investigated in \cite{GPat}, and the congestion problem in \cite{GMit}.
For the time-dependent case, weak solutions were considered in \cite{ll2},
\cite{porretta2}, and \cite{cgbt}. 
he planning problem was studied in \cite{porretta}.
In \cite{CLLP}, the authors have proven the existence of smooth solutions for quadratic Hamiltonians. In
\cite{LIMA}, the author obtained existence of classical solutions for \eqref{mfg}-\eqref{itbc}  under quadratic or subquadratic hypothesis. These results  were substantially improved in \cite{GPM2} (see also \cite{GPim2} for the case of logarithmic non-linearities).
Mean-field games with superquadratic Hamiltonians were investigated in \cite{GPM3}. 

Except for the explicit linear-quadratic models \cite{MR2928378},  regularity of solutions for MFG in the whole space has not been investigated in the literature before the present paper. A main difficulty is the absence of compactness of the domain. This entails several mathematical challenges, as various standard estimates in regularity theory for MFG are simply not valid. One of the key issues is that the Hamiltonian $H$ is no longer integrable. Because of this, the adjoint method as applied in \cite{GPM3} does not yield bounds for the Hamilton-Jacobi equation in terms of Lebesgue norms of the non-linearity $g$. 
To overcome this difficulty, we investigate the integrability of the adjoint variable. First, we prove an entropy dissipation estimate.
Thanks to this, we are then able to achieve local regularity in Sobolev spaces for the Hamilton-Jacobi equation in terms of $L^p(0,T;L^q(\Rr^d))$-norms of $g$. It is important to stress that the key novelty in this paper is to use this entropy dissipation estimate coupled with the adjoint method to obtain estimates in Sobolev spaces.
This is a main difference from our earlier work \cite{GPM3}, where Lipschitz regularity is established for the solutions of the Hamilton-Jacobi equation.

The main result of the paper is:
\begin{teo}\label{mainresult}
Assume that the Assumptions A\ref{ham}-A\ref{alpha}, from Section \ref{ma} hold.
Then, for every $R>0$ there exists a constant $C_R>0$, such that 
any solution  $(u,m)$ to \eqref{mfg}, satisfies
$$\left\|Du\right\|_{L^\infty(0,T;L^p(\mathcal{B}_R))}\leq C_R.$$
\end{teo}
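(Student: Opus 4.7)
The strategy is to combine a non-linear adjoint identity for $|Du|^p$ with an entropy-dissipation inequality for the adjoint density, which together replace the compactness that is unavailable on $\Rr^d$. Fix $R>0$, $t_0\in[0,T]$, and a smooth, non-negative, compactly supported density $\rho_0$ with $\int\rho_0\,dx=1$ and $\rho_0\geq c_R>0$ on $\mathcal{B}_R$. Let $\rho\geq 0$ solve the forward Fokker--Planck equation
\[
\rho_t - \div\bigl(D_pH(x,Du)\,\rho\bigr) = \Delta \rho,\qquad \rho(\cdot,t_0)=\rho_0,\qquad t\in[t_0,T],
\]
which is the formal adjoint of the linearization of the Hamilton--Jacobi operator. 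Estimating $\int|Du(\cdot,t_0)|^p\rho_0\,dx$ and using the lower bound on $\rho_0$ will control $\|Du(\cdot,t_0)\|^p_{L^p(\mathcal{B}_R)}$.

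The first ingredient is the entropy-dissipation estimate for $\rho$. Differentiating $\int\rho\log\rho\,dx$ in time along the equation for $\rho$, integrating by parts, and absorbing the drift $-\int D_pH\cdot D\rho\,dx$ into the Fisher-information term by Young's inequality yields, for every $t\in[t_0,T]$,
\[
\int\rho(\cdot,t)\log\rho(\cdot,t)\,dx + \tfrac12\int_{t_0}^t\!\!\int_{\Rr^d}\tfrac{|D\rho|^2}{\rho}\,dx\,ds \leq \int\rho_0\log\rho_0\,dx + C\int_{t_0}^t\!\!\int_{\Rr^d}|D_pH(x,Du)|^2\rho\,dx\,ds.
\]
Under A\ref{ham} one has $|D_pH|^2\lesssim 1+|Du|^{2(\gamma-1)}$; combined with mass preservation $\int\rho\,dx\equiv 1$, this gives $\rho\in L^\infty(t_0,T;L\log L)$ and, via Gagliardo--Nirenberg interpolated with the Fisher-information bound, the space-time integrability $\rho\in L^r(t_0,T;L^s(\Rr^d))$ needed to justify the subsequent adjoint computation.

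The second ingredient is the non-linear adjoint identity: differentiate \eqref{mfg} in $x_k$, multiply by $p|Du|^{p-2}u_{x_k}\rho$, sum over $k$, and integrate by parts over $\Rr^d\times[t_0,T]$. Thanks to the equation for $\rho$ the principal terms cancel, producing
\[
\int_{\Rr^d}|Du(x,t_0)|^p\rho_0\,dx + \int_{t_0}^T\!\!\int_{\Rr^d}p(p-1)|Du|^{p-2}|D^2u|^2\rho\,dx\,ds \leq \int_{\Rr^d}|Du(x,T)|^p\rho(x,T)\,dx + E,
\]
where $E$ collects the error terms stemming from $H_x$ and from $D g(m)$; the delicate one is $\int\!\!\int g'(m)\,Dm\cdot Du\,|Du|^{p-2}\rho$. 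Integrating by parts to transfer the gradient from $m$ onto $\rho$ and $Du$, and then applying Hölder with the $L^p_tL^q_x$ integrability of $g(m)$ (which comes from the Fokker--Planck equation for $m$ under A\ref{alpha}), the $L^r_tL^s_x$-bound on $\rho$ and the $L^2_{t,x}$-bound on $|D\rho|/\sqrt{\rho}$ from the previous step, one absorbs the dangerous contributions into the positive Hessian term on the left and into $\int|Du|^p\rho$. The lower bound on $\rho_0$ then converts the resulting estimate into the claim.

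The main obstacle is the coupled nature of the two ingredients: the entropy-dissipation inequality needs moments of $|Du|$ weighted by $\rho$, while the $W^{1,p}_{loc}$ estimate needs integrability of $\rho$. Closing this loop is the technical heart of the argument and uses in a crucial way the compatibility between the growth exponent $\gamma$ of $H$ in A\ref{ham} and the growth of $g$ in A\ref{alpha}. On a periodic or bounded domain this difficulty disappears because $\int\rho=1$ already provides strong spatial concentration; the entropy-dissipation step here is the substitute we need for that lost compactness.
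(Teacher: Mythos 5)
Your two building blocks --- the adjoint density driven by $D_pH(x,Du)$ and the entropy/Fisher-information dissipation estimate for it --- are exactly the ones the paper uses, and your route to the space--time integrability of $\rho$ (mass conservation, the bound on $\int\!\!\int |D\rho|^2/\rho$, Gagliardo--Nirenberg) matches Lemmas \ref{lemma3} and \ref{lem1rd} and Proposition \ref{drhohalf}. However, your implementation of the adjoint step is genuinely different from the paper's, and as written it has two gaps. First, you never close the bound on $\int_{t_0}^T\!\!\int |D_pH|^2\rho\,dx\,dt$ that feeds the entropy estimate: you reduce it to moments of $|Du|$ weighted by $\rho$ and then acknowledge that these moments are themselves what you are trying to produce. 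The paper breaks this circle with the \emph{first-order} adjoint identity $\int u(\cdot,\tau)\phi\,dx=\int\!\!\int(D_pH\cdot Du-H+g)\zeta\,dx\,dt+\int u_T\,\zeta(\cdot,T)\,dx$, which together with A\ref{dphminush} and A\ref{dphsq} gives $\int\!\!\int H\zeta\leq C+C\|g\|_{L^c L^a}\bigl(1+\|\zeta\|_{L^{c'}L^{a'}}\bigr)$ (Lemma \ref{lemmahzeta}) and hence the needed control of $\int\!\!\int|D_pH|^2\zeta$; some such step is indispensable in your argument too.

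Second, and more seriously, testing the differentiated equation against $p|Du|^{p-2}u_{x_k}\rho$ produces error terms carrying the weight $|Du|^{p-1}$. The $H_x$ term becomes $\int\!\!\int |Du|^{p-1}|D_xH|\rho\lesssim\int\!\!\int |Du|^{p-1}(1+H)\rho$ by A\ref{dxh}, which is \emph{supercritical} relative to $\int\!\!\int|Du|^p\rho$ because $H$ is superlinear in $p$ (A\ref{ham}); and the $Dg(m)$ term, after your integration by parts, produces $\int\!\!\int |g|\,|Du|^{p-1}\sqrt{\rho}\,\bigl|D\sqrt{\rho}\bigr|$ and $\int\!\!\int|g|\,|Du|^{p-2}|D^2u|\rho$, whose absorption requires control of quantities like $\int\!\!\int|Du|^{(p-1)b}\rho^{b/2}$ with $b=2a/(a-2)>2$, which you do not have. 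You single out the $g$ term as the delicate one, but in your formulation the $H_x$ term is equally problematic. The paper sidesteps all of this by estimating a single directional derivative by duality: it tests $u_\xi$ against the adjoint density started from an arbitrary $\phi\geq 0$ with $\|\phi\|_{L^{p'}}=1$ supported in $\mathcal{B}_R$, so the error terms are $\int\!\!\int|D_\xi H|\zeta\leq C+C\int\!\!\int H\zeta$ and $\int\!\!\int g_\xi\zeta$ with no $|Du|^{p-1}$ weight (Proposition \ref{dulprd}), and recovers $\|Du\|_{L^\infty(0,T;L^p(\mathcal{B}_R))}$ by taking suprema over $\phi$ and $\tau$. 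Unless you can show that the supercritical terms close under A\ref{ham}--A\ref{alpha}, you should replace the Bernstein-type $|Du|^p$ identity by this duality formulation; the entropy-dissipation half of your argument can then be kept as is.
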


After this a-priori bound is derived, one can prove, using standard methods, additional regularity for the solutions. This will not be pursued here as it would follow the same steps as in our previous results, see \cite{GPM2}, \cite{GPM3}. To illustrate this point, we just give an example of how this could be further developed.

Notice that $m\in L^\infty(0,T;L^1(\Tt^d))$. Also, because of Lemma \ref{lemma2}, we have $m\in L^{\af+1}(0,T;L^\frac{2^*(\af+1)}{2}(\Tt^d))$, where $$2^*=\frac{2d}{d-2}.$$ Then, estimates for $m$ in several Lebesgue spaces can be obtained by interpolation. Moreover, by multiplying the second equation in \eqref{mfg} by $\phi^2m^\beta$, where $\phi$ is a spatial cut-off function, one obtains, by standard techniques, bounds for $$\left\|\phi m\right\|_{L^\infty(0,T;L^{\beta+1}(\Tt^d))},$$ and $$\left\|D\left(\phi m^\frac{\beta+1}{2}\right)\right\|_{L^2(0,T;L^{2}(\Tt^d))}.$$ This can be iterated as in \cite{GPM2}, for example, to produce improved integrability for $m$. 

Section \ref{assumptions} introduces the technical assumptions under which we work. An outline of the proof of Theorem \ref{mainresult} is presented in Section \ref{outline}. A number of auxiliary results are presented in Section \ref{be}. The proof of Theorem \ref{mainresult} is given in Section \ref{secsob} by developing the adjoint method in Sobolev spaces.

\section{Main assumptions and outline of the proof}\label{ma}

In this section, we present the set of assumptions under which we will work, as well as an outline of the proof of Theorem \ref{mainresult}. 

\subsection{Main assumptions}\label{assumptions}

\begin{Assumption}\label{ham}
We assume that the Hamiltonian
$H\in\mathcal{C}^\infty(\Rr^d\times \Rr^d)$ is strictly convex in the second variable. Furthermore, we suppose that $H$ is non-negative and coercive, i.e.,
\[
\lim_{|p|\to \infty} \frac{H(x,p)}{|p|}=+\infty.
\] 
\end{Assumption}

\begin{Assumption}\label{ic}
The functions $u_T$ and $m_0$ are  smooth and integrable. Furthermore, $m_0$ is non-negative, compactly supported, and satisfies $$\int_{\Rr^d}m_0dx\;=\;1.$$
\end{Assumption}

The Lagrangian $L(x,v)$ is defined as follows:
$$L(x,v)\doteq \sup_p-p\cdot v-H(x,p).$$

\begin{Assumption}\label{lagrange}
The Lagrangian $L(x,v):\Rr^d\times\Rr^d\mapsto\Rr$ is such that $L(x,0)\in L^1(\Rr^d)\cap L^\infty(\Rr^d)$ and $L(x,0)\geq 0$.
\end{Assumption}

\begin{Assumption}\label{dphminush}For some constants $c,\,C>0$
$$D_pH(x,p)\cdot p-H(x,p)\geq cH(x,p)-C.$$
\end{Assumption}

\begin{Assumption}\label{ge}
The non-linearity $g$ is increasing. Also, there exists $C>0$, such that
\begin{equation}
g[m](x,t)\leq
\begin{cases}
C m(x,t),&\;  m\leq 1\\
C m^\alpha(x,t),&\; m> 1,
\end{cases}
\end{equation} with $g(0)=0$.
Since $g$ is increasing, it is the derivative of a convex function $G:\Rr\to\Rr$. We assume that $G$ is such that, 
for $z>1$,
 $$C_1z^{\alpha+1}\leq G(z)\leq C_2z^{\alpha+1}.$$
\end{Assumption}

\begin{Assumption}
\label{asbdpph}
The Hamiltonian $H$ is such that
$\left|D^2_{xp}H\right|^2\leq CH$
and, for any symmetric matrix $M$
\[
\left|D^2_{pp}HM\right|^2\leq  C \tr(D^2_{pp} H M M). 
\]
\end{Assumption}


\begin{Assumption}\label{dphsq}There is $C>0$ such that
$$\left|D_pH(x,p)\right|^2\leq C+ CH(x,p).$$
\end{Assumption}

\begin{Assumption}\label{dxh}There exists $C>0$ such that
$$\left|D_xH(x,p)\right|\leq C+ CH(x,p).$$
\end{Assumption}

\begin{Assumption}\label{alpha}The exponent $\alpha$ is such that
\begin{equation}
0\,<\,\alpha\,<\,
\frac{1}{d-1}
\end{equation}
\end{Assumption}

The exponent $\alpha$ in the previous assumption is critical in the following arguments, see Lemma \ref{lt4}.


\subsection{Outline of the proof}\label{outline}

In order to justify rigorously our computations, one needs to consider a regularized version of \eqref{mfg}.
In this, the local non-linearity $g$ is replaced by the non-local operator $$g_\epsilon(m)\doteq\eta_\ep\ast g\left(\eta_\ep\ast m\right),$$
where  $\eta_\ep$ is a symmetric standard mollifying kernel.
We assume that $g_0=g$. This procedure yields the following regularized system:

\begin{equation}\label{smfg}
\begin{cases}
-u^\epsilon_t+H(x,Du^\epsilon)=\Delta u^\epsilon+g_\epsilon(m^\epsilon)\\m^\epsilon_t-\div(D_pH m^\epsilon)=\Delta m^\epsilon.
\end{cases}
\end{equation}



The existence of smooth solutions to \eqref{smfg},
can be proved adapting the ideas in \cite{cardaliaguet} (see also \cite{GPM2}). However, for the sake of simplicity, we consider the original system throughout the paper, and establish a-priori estimates.

To obtain Theorem \ref{mainresult}, we first investigate the Sobolev regularity of the solutions of the Hamilton-Jacobi, as stated in the following Proposition. 

\begin{Proposition}\label{proposition1}
Let $(u,m)$ be a solution to \eqref{mfg} and suppose that Assumptions A\ref{ham}-\ref{dxh} are satisfied.
Suppose $a$ and $b$ satisfy \eqref{te1}. 
Then
there exist $C_R, \theta_1, \theta_2$ such that 
$$\left\|Du\right\|_{L^\infty(0,T;L^p(\mathcal{B}_R))}\leq C_R+C_R\left\|g\right\|_{L^c(0,T;L^a(\Rr^d))}^{\theta_1}+C_R\left\|g\right\|_{L^c(0,T;L^a(\Rr^d))}^{\theta_2}.$$
\end{Proposition}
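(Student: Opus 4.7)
The plan is to implement the nonlinear adjoint method locally, testing powers of $|Du|$ against a forward Fokker-Planck density $\rho$ playing the role of a linearization adjoint, and to use the entropy dissipation estimate of Section \ref{be} to compensate for the loss of compactness of $\Rr^d$. Concretely, I would fix $R>0$, choose a cutoff $\phi\in C^\infty_c(\Rr^d)$ with $\phi\equiv 1$ on $\mathcal{B}_R$ and $\supp\phi\subset\mathcal{B}_{2R}$, and introduce $\rho$ solving
\begin{equation*}
\rho_t-\div\bigl(D_pH(x,Du)\rho\bigr)=\Delta\rho,\qquad \rho(\cdot,0)=\rho_0,
\end{equation*}
for a fixed smooth, nonnegative, compactly supported $\rho_0$ of unit mass. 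Since this is the same equation as the one for $m$ in \eqref{mfg}, the auxiliary estimates of Section \ref{be}, and in particular Lemma \ref{lemma2}, furnish the $L^{c'}_tL^{a'}_x$-type integrability of $\rho$ and of $D\sqrt{\rho}$ needed to dualize $\|g\|_{L^c(0,T;L^a(\Rr^d))}$ under the hypothesis \eqref{te1}.

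Next, I would differentiate \eqref{mfg} in $x_k$, multiply by $\phi^2\,u_{x_k}|Du|^{p-2}\rho$, sum on $k$, and integrate on $\Rr^d\times(t,T)$. Using the adjoint equation for $\rho$ to cancel the first-order transport term and integrating the Laplacian by parts, the identity collapses to
\begin{equation*}
\int_{\Rr^d}\phi^2|Du(\cdot,t)|^p\rho(\cdot,t)\,dx+\int_t^T\!\!\int_{\Rr^d}\phi^2\,\tr\bigl(D^2_{pp}H\,D^2u\,D^2u\bigr)|Du|^{p-2}\rho\,dx\,ds,
\end{equation*}
bounded by the terminal contribution $\int\phi^2|Du_T|^p\rho(\cdot,T)\,dx$, by terms involving $D_xH$ and $D^2_{xp}H$ (handled via Assumptions A\ref{asbdpph} and A\ref{dxh} together with the $H$-entropy bounds inherited from Section \ref{be}), by commutators with $\phi$ producing lower-order $|D\phi|$-weighted contributions supported in $\mathcal{B}_{2R}\setminus\mathcal{B}_R$, and by the source integral $\int_t^T\!\!\int\phi^2 u_{x_k}|Du|^{p-2}\bigl(g(m)\bigr)_{x_k}\rho\,dx\,ds$.

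The delicate point is the source term. I would move the $x_k$-derivative off $g(m)$ by integrating by parts, so that $g$ multiplies derivatives of $\phi^2|Du|^{p-2}u_{x_k}\rho$, and then apply H\"older with the pair $(a,c)$, bounding the outcome by $\|g\|_{L^c_tL^a_x}$ times an $L^{c'}_tL^{a'}_x$-norm of the remainder. The remaining factors of $\rho$ and $D\rho$ are absorbed through Lemma \ref{lemma2} under the compatibility condition \eqref{te1}, whereas the $D^2u$-factor is reabsorbed into the positive second term on the left via Young's inequality and Assumption A\ref{asbdpph} (in the form $|D^2_{pp}HM|^2\le C\tr(D^2_{pp}HMM)$). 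The residual power of $|Du|$ is hidden back into the left-hand side by a further Young inequality, which is precisely what generates the two exponents $\theta_1,\theta_2$ appearing in the statement. The main obstacle I foresee is the bookkeeping around the cutoff commutators: they live on $\{D\phi\ne 0\}$, where no global control of $Du$ is available in $\Rr^d$, so their absorption must rely exclusively on the local integrability of $\rho$ granted by the entropy dissipation estimate, which is exactly the novelty over \cite{GPM3}. Once this is carried out, a Gronwall argument applied to $t\mapsto\int\phi^2|Du|^p\rho\,dx$ together with the supremum over $t\in[0,T]$ yields the announced local Sobolev bound.
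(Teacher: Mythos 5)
Your overall strategy diverges from the paper's in a way that leaves a genuine gap. The paper does not bound a $\rho$-weighted quantity $\int\phi^2|Du|^p\rho\,dx$; it bounds the linear functional $\int_{\Rr^d}u_\xi(x,\tau)\phi(x)\,dx$ for an arbitrary nonnegative, compactly supported $\phi$ with $\left\|\phi\right\|_{L^{p'}(\Rr^d)}=1$, taking $\phi$ itself as the datum of the adjoint equation \eqref{adjlnurdws}, and then recovers $\left\|Du\right\|_{L^\infty(0,T;L^p(\mathcal{B}_R))}$ by duality (supremum over $\phi$, then over $\tau$). Your version only controls $\int\phi^2|Du|^p\rho\,dx$ for a \emph{fixed} $\rho_0$, and to convert that into an unweighted bound on $\left\|Du\right\|_{L^p(\mathcal{B}_R)}$ you would need a time-uniform positive lower bound for $\rho$ on $\mathcal{B}_R$. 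No such bound is available: $\rho$ is transported by the drift $D_pH(x,Du)$, whose size is exactly what is being estimated, so a Harnack-type lower bound cannot be invoked at this stage. This is the step your argument cannot close.

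Two further points. First, because the paper works with a single directional derivative $u_\xi$ \emph{linearly}, the transport term $D_pH\cdot Du_\xi$ cancels exactly against the divergence term of the adjoint equation and no second-order terms in $u$ appear; the only delicate term is $\int_\tau^T\int_{\Rr^d}g_\xi\zeta\,dx\,dt$, which is integrated by parts onto $\zeta$ and estimated through $\left\|D\zeta^{1/2}\right\|_{L^2}$ (Proposition \ref{drhohalf}), Gagliardo--Nirenberg (Lemma \ref{lt2}), and a self-improvement in $\left\|\zeta\right\|_{L^{c'}(0,T;L^{a'}(\Rr^d))}$ closed by Young's inequality (Corollary \ref{cor1}); that bootstrap is where the exponents $\theta_1,\theta_2$ actually come from, not from hiding powers of $|Du|$ back into the left-hand side. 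Your test function $\phi^2 u_{x_k}|Du|^{p-2}\rho$ reintroduces the term $\tr(D^2_{pp}H\,D^2u\,D^2u)|Du|^{p-2}\rho$ and forces you to absorb $g\cdot D\bigl(\phi^2|Du|^{p-2}u_{x_k}\rho\bigr)$ into it, which would require a quantitative lower bound on $D^2_{pp}H$ that the assumptions do not provide (A\ref{ham} gives only strict convexity). Second, the $L^{c'}(0,T;L^{a'}(\Rr^d))$ integrability of the adjoint variable is not supplied by Lemma \ref{lemma2}: that lemma concerns $m$ and relies on the coupled second-order estimate through $g'(m)|D_xm|^2$. For the adjoint variable the paper must build this integrability from scratch via the moment bound (Lemma \ref{lemma3}) and the entropy dissipation estimate (Lemma \ref{lem1rd}), which is precisely the announced novelty of the argument.
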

This is accomplished by combining the non-linear adjoint method with an improved integrability estimate for the adjoint variable. These new ideas are crucial to circumvent the lack of integrability of $H$.
Proposition \ref{proposition1} is proven in Section \ref{sobolev}. To prove $W^{1,p}_{Loc}(\Rr^d)$-regularity for $u$, it is critical to control the integrability of $g$ with respect to both time and space. This is done in the next Lemma:

\begin{Lemma}\label{lemma2}
Let $(u,m)$ be a solution to \eqref{mfg} and suppose that assumptions A\ref{ham}-A\ref{asbdpph} are satisfied. Then, there exists a constant $C>0$ such that $$\left\|m\right\|_{L^{\af+1}(0,T;L^{\frac{2^*(\af+1)}{2}}(\Rr^d))}\,\leq\,C.$$
\end{Lemma}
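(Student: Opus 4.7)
The plan is to couple an energy identity for the MFG system (giving an $L^{\alpha+1}_{t,x}$ bound on $m$) with a gradient estimate on $m^{(\alpha+1)/2}$ obtained by testing the Fokker-Planck equation against $g(m)$, and to conclude via the spatial Sobolev embedding.

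First, I would establish the classical MFG energy identity by multiplying the Hamilton-Jacobi equation by $-m$, the Fokker-Planck equation by $u$, summing, and integrating over $\Rr^d\times[0,T]$. After integration by parts (boundary terms at infinity are killed by the integrability of $m$ and the boundedness of $u$), the Laplacian terms cancel, the time derivatives combine into $\partial_t(um)$, and the drift contributes $-Du\cdot D_pH\,m$. This produces the identity
\begin{equation*}
\int u(0)m_0\,dx-\int u_T\,m(T)\,dx = \int_0^T\!\!\int\bigl(Du\cdot D_pH-H\bigr)m\,dx\,dt+\int_0^T\!\!\int m\,g(m)\,dx\,dt.
\end{equation*}
Mass conservation, boundedness of $u$ (via standard maximum-principle arguments under A\ref{ham}--A\ref{lagrange}), the lower bound $Du\cdot D_pH-H\geq cH-C$ from A\ref{dphminush}, and $m\,g(m)\geq C_1 m^{\alpha+1}$ on $\{m>1\}$ from A\ref{ge}, together yield
\begin{equation*}
\int_0^T\!\!\int H(x,Du)\,m\,dx\,dt+\int_0^T\!\!\int m^{\alpha+1}\,dx\,dt\leq C.
\end{equation*}

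Next, I would multiply the Fokker-Planck equation by $g(m)=G'(m)$ and integrate in $x$, obtaining
\begin{equation*}
\frac{d}{dt}\int G(m)\,dx+\int g'(m)|\nabla m|^2\,dx=-\int g'(m)\,\nabla m\cdot D_pH\,m\,dx.
\end{equation*}
On $\{m>1\}$, A\ref{ge} gives $g'(m)\sim\alpha m^{\alpha-1}$, so the dissipation controls a constant multiple of $|\nabla m^{(\alpha+1)/2}|^2$; the region $\{m\leq 1\}$ contributes only lower-order terms. Cauchy-Schwarz on the right-hand side absorbs half of the dissipation on the left, at the cost of $C\int g'(m)m^2|D_pH|^2\,dx$. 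This cross term is then handled by combining the $L^1_{t,x}$ bound on $Hm$, mass conservation $\int m=1$, and the second-derivative controls of A\ref{asbdpph}, via interpolation. Integrating in $t$ and using $G(m_0)\in L^1$ (from the compact support and smoothness of $m_0$ in A\ref{ic}) yields
\begin{equation*}
\int_0^T\!\!\int\bigl|\nabla m^{(\alpha+1)/2}\bigr|^2\,dx\,dt\leq C.
\end{equation*}

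The Sobolev embedding $H^1(\Rr^d)\hookrightarrow L^{2^*}(\Rr^d)$, applied pointwise in $t$, then gives
\begin{equation*}
\int_0^T\|m\|_{L^{2^*(\alpha+1)/2}(\Rr^d)}^{\alpha+1}\,dt=\int_0^T\bigl\|m^{(\alpha+1)/2}\bigr\|_{L^{2^*}(\Rr^d)}^2\,dt\leq C,
\end{equation*}
which is the claim. The main obstacle is the control of the cross term $\int\!\!\int g'(m)m^2|D_pH|^2\,dx\,dt$ arising in the gradient estimate: since A\ref{dphsq} (which would give a pointwise bound of $|D_pH|^2$ by $C(1+H)$) is not invoked in this lemma, one has to exploit A\ref{asbdpph} together with the already-controlled quantities $\int Hm$ and $\int m^{\alpha+1}$ through a careful interpolation, rather than a direct pointwise substitution.
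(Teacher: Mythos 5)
Your first and last steps match the paper: the energy identity giving $\int_0^T\int H(x,Du)m+m^{\alpha+1}\,dx\,dt\le C$ is Proposition \ref{pehm}/Corollary \ref{corollary1}, and the conclusion via $\|m\|_{L^{2^*(\alpha+1)/2}}^{\alpha+1}=\|m^{(\alpha+1)/2}\|_{L^{2^*}}^2$ and Sobolev embedding is exactly how the paper closes. The gap is in the middle step, the bound on $\int_0^T\int g'(m)|Dm|^2$. The paper obtains this from the Lasry--Lions \emph{second-order} estimate (Proposition \ref{soe}), which couples the two equations: apply $\Delta$ to the Hamilton--Jacobi equation and multiply by $m$, multiply the Fokker--Planck equation by $\Delta u$, and add. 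The transport cross terms then cancel exactly, leaving schematically
\[
\int_0^T\!\!\int g'(m)|Dm|^2+\tr\bigl(D^2_{pp}H(D^2u)^2\bigr)m\;\le\;C+C\int_0^T\!\!\int\bigl(|\Delta_xH|+|\tr(D^2_{xp}HD^2u)|\bigr)m,
\]
and the right-hand side is absorbed into the good Hessian term on the left using A\ref{asbdpph} together with $\int Hm\le C$. Your replacement --- testing only the Fokker--Planck equation against $g(m)$ --- forfeits this cancellation and leaves the cross term $\int g'(m)\,m\,D_pH\cdot\nabla m$, whose Cauchy--Schwarz residue is $\int g'(m)m^2|D_pH|^2\sim\int m^{\alpha+1}|D_pH|^2\lesssim\int m^{\alpha+1}(1+H)$. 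This quantity is \emph{not} controlled by the two bounds at your disposal, $\int Hm\le C$ and $\int m^{\alpha+1}\le C$: no interpolation produces $\int Hm^{\alpha+1}$ from them ($H$ itself lies in no Lebesgue space on $\Rr^d$), and A\ref{asbdpph} constrains $D^2_{xp}H$ and $D^2_{pp}H$, not $D_pH$; it is only useful when the term $\tr(D^2_{pp}H(D^2u)^2)m$ sits on the good side of the inequality, which it does not in your computation. Integrating by parts the other way, writing the cross term as $\int D_pH\cdot\nabla\Phi(m)$ with $\Phi'=mg'$, fares no better: it produces $\int\tr(D^2_{pp}HD^2u)\,m^{\alpha+1}$, again with the wrong weight in $m$. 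So the ``careful interpolation'' you defer to at the end is precisely the missing idea, and I do not see how to supply it along your route; the coupled second-order estimate is what makes this lemma work.
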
 The proof of Lemma \ref{lemma2} is given in Section \ref{be}.

Because $g$ is a power-like non-linearity, the estimate in Lemma \ref{lemma2} yields an upper bound for norms of $g$ in some appropriate Lebesgue space.
This Lemma is then combined with  Proposition \ref{proposition1} and the technical Lemma \ref{lt4}
to prove 
Theorem \ref{mainresult}.

\section{Basic estimates}\label{be}

In this Section, we obtain various estimates for solutions of \eqref{smfg}. These bounds are similar to the ones
 for the periodic setting, in \cite{ll1,ll2,GPM2}, and the arguments to prove them are not substantially modified for $\Rr^d$. Consequently, they are only discussed here briefly, for convenience.

We begin by considering the auxiliary equation:
\begin{equation}
\begin{cases}
\zeta_t+\div(b\zeta)=\Delta\zeta\\\zeta(x,\tau)=\zeta_0(x),
\end{cases}
\end{equation}where $b:\Rr^d\times(\tau,T)\to\Rr^d$ is a smooth vector field, $0<\tau<T$ is arbitrary and $\zeta_0$ is a given initial condition.

\begin{Lemma}\label{lemma1}
Let $(u,m)$ be a solution to \eqref{mfg} and assume that Assumptions A\ref{ham}-A\ref{lagrange} hold. Then,
\begin{enumerate}
\item
\begin{align}
\label{etreze}
\int_{\Rr^d}u(x,\tau)m_0 dx&\le
CT+\int_\tau^T\int_{\Rr^d}g(m)(x,t)\zeta^{m_0}(x,t) dxdt \\\nonumber&
\quad+\int_{\Rr^d} u(x,T) \zeta^{m_0}(x,T)dx,
\end{align}
where $\zeta^{m_0}(x,t)$ is the solution to the heat equation with $\zeta_0=m_0$. 
Also,
\item 
\begin{equation}
\label{ecatorze}
\int_{\mathcal{B}_R} u(x,\tau)dx\leq CT +\int_\tau^T \int_{\Rr^d}
g(m)(x,t)\zeta^{\chi_{\mathcal{B}_R}}(x,t)dxdt, 
\end{equation}where $\zeta^{\chi_{\mathcal{B}_R}}(x,t)$ is the solution to
the heat equation with initial condition $\zeta_0=\chi_{\mathcal{B}_R}$, for arbitrarily fixed $R>0$, and
$\chi_{\mathcal{B}_R}$ denotes the characteristic function of $\mathcal{B}_R$.
\end{enumerate}
\end{Lemma}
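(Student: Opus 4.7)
The plan is to test the Hamilton-Jacobi equation against the non-negative heat-kernel solution $\zeta$ on $\Rr^d \times [\tau, T]$ and integrate by parts, then control the $H$-integral from below using Fenchel duality. Multiplying $-u_t + H(x, Du) = \Delta u + g(m)$ by $\zeta$ and integrating once in time on the $u_t$ term and twice in space on the $\Delta u$ term, the intermediate term $\int u\, \zeta_t\, dx dt$ cancels against $\int u\, \Delta\zeta\, dx dt$ via the heat equation $\zeta_t = \Delta \zeta$ (which is our choice $b = 0$). What remains is the clean identity
\[
\int_{\Rr^d} u(x,\tau)\zeta_0\, dx + \int_\tau^T\!\!\int_{\Rr^d} H(x, Du)\, \zeta\, dx dt = \int_{\Rr^d} u(x,T)\zeta(x,T)\, dx + \int_\tau^T\!\!\int_{\Rr^d} g(m)\, \zeta\, dx dt.
\]

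Next I would bound the $H$-integral from below. By Fenchel duality, $H(x,p) \geq -p \cdot 0 - L(x,0) = -L(x,0)$, and $\|L(\cdot,0)\|_{L^\infty} < \infty$ by Assumption A\ref{lagrange}. The maximum principle forces $\zeta \geq 0$, and the driftless heat equation preserves mass, so $\int_{\Rr^d} \zeta(x,t)\, dx \equiv \int_{\Rr^d}\zeta_0\, dx$ for all $t \in [\tau, T]$, hence
\[
\int_\tau^T\!\!\int_{\Rr^d} H(x, Du)\, \zeta\, dx dt \geq -\|L(\cdot,0)\|_{L^\infty} \Bigl(\int_{\Rr^d}\zeta_0\, dx\Bigr)(T - \tau).
\]
Feeding this back into the identity and specializing yields both claims. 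For part (1), $\zeta_0 = m_0$ has unit mass by Assumption A\ref{ic}, producing exactly the $CT$ term in \eqref{etreze}. For part (2), $\zeta_0 = \chi_{\mathcal{B}_R}$ has mass $|\mathcal{B}_R|$, which gives an $O(T)$ error with constant depending on $R$; the terminal contribution $\int u(x,T)\zeta^{\chi_{\mathcal{B}_R}}(x,T)\, dx \leq \|u_T\|_{L^\infty}|\mathcal{B}_R|$ (finite by A\ref{ic}) is $T$-independent and can be absorbed into the constant $CT$, yielding \eqref{ecatorze}.

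The main technical hurdle is the rigorous justification of the integration by parts in $\Rr^d$: boundary terms at infinity need to vanish. I would carry out the computation on the regularized system \eqref{smfg}, where the Gaussian decay of the heat kernel $\zeta$ combined with a-priori growth control on $u^\epsilon$ kills the boundary contribution, and then pass to the limit $\epsilon \to 0$. A parallel route which bypasses the integration by parts altogether is the stochastic representation: applying the verification theorem to $u$ with the zero feedback control in the diffusion $dX_s = \sqrt{2}\, dW_s$ produces the same inequality, with the $CT$ correction now appearing explicitly as $\mathbb{E}\!\int_\tau^T L(X_s, 0)\, ds \leq \|L(\cdot,0)\|_{L^\infty}(T-\tau)$.
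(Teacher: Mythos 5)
Your proposal is correct and follows essentially the same route as the paper: the paper invokes the general control-theoretic duality inequality (from \cite{GPM2}) with drift $b$ and sets $b\equiv 0$, which is exactly your integration-by-parts identity combined with the Fenchel bound $H(x,Du)\ge -L(x,0)$, followed by mass conservation and the boundedness of $L(\cdot,0)$ from A\ref{lagrange}. Your explicit handling of the terminal term in \eqref{ecatorze} and of the justification of the integration by parts at infinity is if anything slightly more careful than the paper's one-line treatment.
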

\begin{proof}
We have as in \cite{GPM2}
\begin{align*}
\label{lhe2}
\int_{\Tt^d} u(x,t)\zeta_0(x) dx \le
&\int_t^T\int_{\Tt^d}\bigl(L(y,b(y,s))
+g(m)(y,s)\bigr)\zeta(y,s) dy ds\\\nonumber
&+\int_{\Tt^d} u(y, T) \zeta(y,T). 
\end{align*}Set $b\,\equiv\,0$. Consider first the case $\zeta_0\,\equiv\,m_0$. Because $\zeta^{m_0}$ is a probability measure and $L(x,0)$ is bounded,  one obtains $$\int_{\Rr^d}L(x,0)\zeta^{m_0}(x,t)dx\leq C,$$for some $C>0$. This implies \eqref{etreze}. To establish \eqref{ecatorze} we set $b\equiv 0$ and $\zeta_0=\chi_{\mathcal{B}_R}$. 
\end{proof}

Next, we recover the first-order estimates in the whole space setting. 

\begin{Proposition}
\label{pehm}
Assume A\ref{ham}-A\ref{ge} hold. 
Let $(u, m)$ be a solution of \eqref{mfg}. Then
\begin{equation*}
\label{ihm}
\int_0^T \int_{\Tt^d} c H(x, D_xu) m
+G(m) dx dt 
\leq CT+C \left\|u(\cdot, T)\right\|_{L^\infty(\Tt^d)}, 
\end{equation*}
where $G'=g$.
\end{Proposition}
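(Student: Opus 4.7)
\emph{Proof plan.} The plan is to run the standard Lasry--Lions cross-multiplication identity on the torus and then control the boundary term $\int u(0)m_0\,dx$ by means of Lemma \ref{lemma1}.

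First, I would multiply the Hamilton--Jacobi equation in \eqref{mfg} by $m$, multiply the Fokker--Planck equation by $u$, integrate over $\Tt^d\times[0,T]$ and subtract. On the closed manifold there are no boundary terms, so the cross-Laplacian terms $\int m\Delta u$ and $\int u\Delta m$ cancel after two integrations by parts; the transport term $-\int u\,\div(D_pH\,m)$ becomes $\int D_pH\cdot Du\,m$; and the time derivatives combine as $\int_0^T(u_tm+um_t)\,dt=\int u(T)m(T)-\int u(0)m_0$. The resulting identity is
\[
\int_{\Tt^d}u(T)m(T)\,dx-\int_{\Tt^d}u(0)m_0\,dx+\int_0^T\!\int_{\Tt^d}\bigl[(D_pH\cdot Du-H)m+g(m)m\bigr]\,dx\,dt=0.
\]
I would then invoke Assumption A\ref{dphminush} in the form $(D_pH\cdot Du-H)m\ge cHm-Cm$, use that $g$ increasing with $g(0)=G(0)=0$ forces $g(z)z\ge G(z)$ for $z\ge 0$, and combine with mass conservation $\int m(\cdot,t)\,dx\equiv 1$ (obtained by integrating the Fokker--Planck equation) to rearrange the identity into
\[
\int_0^T\!\int_{\Tt^d}\bigl(cHm+G(m)\bigr)\,dx\,dt\;\le\; \int_{\Tt^d}u(0)m_0\,dx-\int_{\Tt^d}u(T)m(T)\,dx+CT.
\]
The term $-\int u(T)m(T)\,dx$ is immediately bounded by $\|u(\cdot,T)\|_{L^\infty(\Tt^d)}$ since $m(T)$ is a probability measure.

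The main obstacle is then to bound $\int_{\Tt^d}u(0)m_0\,dx$ in terms of the right-hand side of the proposition. For this I would apply the first item of Lemma \ref{lemma1} at $\tau=0$, obtaining
\[
\int u(0)m_0\,dx\;\le\; CT+\int_0^T\!\int g(m)\,\zeta^{m_0}\,dx\,dt+\int u(T)\,\zeta^{m_0}(T)\,dx,
\]
where $\zeta^{m_0}$ is the heat flow with initial datum $m_0$. Since the heat semigroup preserves mass and satisfies $\|\zeta^{m_0}\|_{L^\infty}\le\|m_0\|_{L^\infty}$ by the maximum principle, the last integral is controlled by $\|u(\cdot,T)\|_{L^\infty}$ and the middle integral by $C\iint g(m)\,dx\,dt$. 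Finally, Assumption A\ref{ge} together with Young's inequality $m^\alpha\le\epsilon\, m^{\alpha+1}+C_\epsilon$ and the lower bound $G(m)\ge C_1 m^{\alpha+1}$ for $m>1$ yield $\iint g(m)\,dx\,dt\le CT+\epsilon\iint G(m)\,dx\,dt$; choosing $\epsilon$ small enough allows the $\epsilon\iint G(m)$ term to be absorbed into the left-hand side of the previous display, producing the stated estimate.
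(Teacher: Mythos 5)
Your proof is correct and follows essentially the same route as the paper: the Lasry--Lions cross-multiplication identity, Assumption A\ref{dphminush}, and the first item of Lemma \ref{lemma1} to dispose of $\int u(0)m_0\,dx$. The only variation is the last step: the paper keeps $g(m)(\zeta^{m_0}-m)$ together, bounds it by $G(\zeta^{m_0})-G(m)$ via convexity, and controls $\iint G(\zeta^{m_0})$ with Young's convolution inequality, whereas you use $m\,g(m)\ge G(m)$ together with the $L^\infty$-contractivity of the heat semigroup and an absorption argument for $\iint g(m)\zeta^{m_0}$; both are valid.
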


\begin{proof}
We have
\[
-\frac{d}{dt}\int_{\Tt^d}u^\epsilon m^\epsilon dx+\int_{\Tt^d}(H-D_pH
D_xu^\epsilon)m^\epsilon dx =\int_{\Tt^d}m^\epsilon g_\epsilon(m^\epsilon)dx.
\]
Assumption A\ref{dphminush}, leads to
\begin{align*}
&c \int_0^T\int_{\Tt^d} H(x,Du) m dx\,dt\leq 
\int_0^T\int_{\Tt^d} (D_pH D_xu-H) m dx\,dt=\\
&-\int_0^T\int_{\Tt^d}mg(m)dx+\int_{\Tt^d}\left( u(x,0)
  m(x, 0)- u(x,T) m(x, T)\right)dx.  
\end{align*}
By using the first assertion in Lemma \ref{lemma1} one obtains
\begin{align*}
c \int_0^T\int_{\Tt^d}H(x,Du)mdx\,dt \leq 
&CT+\int_{\Tt^d} u(x,T)(\zeta^{m_0}(x,T)- m(x,T))dx\\
&+\int_0^T\int_{\Tt^d} g(m)(\zeta^{m_0} -m) dx\,dt,
\end{align*}where $\zeta$ solves the heat equation with $\zeta_0(x)=m_0(x)$.

Assumptions A\ref{ge} ensures the existence of a convex function $G$ with
$G'(z)=g(z)$. Hence, 
$
g(m)(\zeta^{m_0}-m) \leq
G(\zeta^{m_0}) -G(m), 
$
and then,
\begin{align*}
c \int_0^T\int_{\Tt^d} H(x, Du) m dx\,dt 
&+\int_0^T\int_{\Tt^d} G(m) dxdt\\ 
\qquad
&\leq CT +\left\|u(\cdot , T)\right\|_{L^\infty(\Tt^d)}+\int_0^T\int_{\Tt^d}
G(\zeta^{m_0}) dxdt.
\end{align*}
It remains for us to control $$\int_0^T\int_{\Tt^d}
G(\zeta^{m_0}) dxdt.$$ Because of A\ref{ge} we have
$$\int_0^T\int_{\Tt^d}
G(\zeta^{m_0}) dxdt\leq C\int_0^T\int_{\Tt^d}\left(\zeta^{m_0}\right)^{\alpha+1}\leq C\int_0^T\int_{\Tt^d}\left(\theta*m_0\right)^{\alpha+1},$$where $\theta$ is the heat kernel. Therefore, A\ref{ic} together with the Young's inequality for convolutions implies the result.
\end{proof}

\begin{Corollary}\label{corollary1}
Assume A\ref{ham}-A\ref{ge} hold. 
Let $(u,m)$ be a solution of \eqref{mfg}. Then 
\[
\int_0^T\int_{\Tt^d}m^{\alpha+1} 
+H(x,Du) m dxdt \leq C. 
\]
\end{Corollary}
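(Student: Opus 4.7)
The plan is to deduce Corollary \ref{corollary1} directly from Proposition \ref{pehm} by replacing $G(m)$ with $m^{\alpha+1}$ in the integrand, and then absorbing a harmless $L^\infty$--norm of $u_T$ into the constant.

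First, I would observe that Proposition \ref{pehm} already yields
\[
\int_0^T\!\!\int_{\Tt^d}\!\bigl(cH(x,Du)\,m + G(m)\bigr)\,dx\,dt \;\le\; CT + C\|u(\cdot,T)\|_{L^\infty},
\]
and the right-hand side is finite because Assumption A\ref{ic} forces $u_T$ to be smooth and integrable, hence bounded. So it suffices to show that the $G(m)$ term controls $m^{\alpha+1}$ up to an additive constant.

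To this end, I would split the space-time domain into the two regions $\{m\le 1\}$ and $\{m>1\}$. On the region $\{m>1\}$, the lower bound $G(z)\ge C_1 z^{\alpha+1}$ from A\ref{ge} gives $m^{\alpha+1}\le C_1^{-1}G(m)$ pointwise, so this part is absorbed into the bound of Proposition \ref{pehm}. On the region $\{m\le 1\}$, one simply uses $m^{\alpha+1}\le m$ (because $\alpha>0$), and integrates against the fact that $m(\cdot,t)$ is a probability density for all $t\in[0,T]$ (a consequence of the mass-preserving property of the Fokker--Planck equation, combined with $\int m_0\,dx=1$ from A\ref{ic}). This gives
\[
\int_0^T\!\!\int_{\{m\le 1\}} m^{\alpha+1}\,dx\,dt \;\le\; \int_0^T\!\!\int_{\Tt^d} m\,dx\,dt \;=\; T.
\]

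Combining the two pieces yields $\int_0^T\!\int m^{\alpha+1}\,dx\,dt\le C$, and adding back the nonnegative term $\int_0^T\!\int H(x,Du)\,m\,dx\,dt$ (which is already controlled by Proposition \ref{pehm} thanks to A\ref{ham}) completes the proof. There is no significant obstacle here; the only minor care is to ensure that the constants in A\ref{ge} are used in the right direction on each region, and that mass conservation for $m$ is invoked explicitly in the low-density region.
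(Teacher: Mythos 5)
Your proof is correct and follows exactly the route the paper intends (the Corollary is stated without proof as an immediate consequence of Proposition \ref{pehm}): split into $\{m\le 1\}$, where $m^{\alpha+1}\le m$ and mass conservation gives the bound $T$, and $\{m>1\}$, where the lower bound $C_1 z^{\alpha+1}\le G(z)$ from A\ref{ge} applies. The only point worth making explicit is that $G\ge 0$ (which follows from $g\ge 0$, since $g$ is increasing with $g(0)=0$, together with the normalization $G(0)=0$), so that discarding $G(m)$ on $\{m\le 1\}$ and the restriction to $\{m>1\}$ are both legitimate.
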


In what follows, we recover the second-order estimate in the whole space. Since its proof 
is similar to the one in
 the periodic setting, it is omitted here (we refer the reader to \cite{GPM2}).

\begin{Proposition}\label{soe}
Assume A\ref{ham}-\ref{asbdpph} hold. 
Let $(u,m)$ be a solution of \eqref{mfg}. Then
\begin{align*}
&\int_0^T\int_{\Tt^d}  g'(m)|D_x m|^2
+
\tr(D_{pp}^2H(D^2_{xx}u)^2) m
\leq \max_x\Delta u(x,T)\\&+
 C(1+\max_x u(x,T) -\min_xu(x, T))-\int_{\Tt^d} u(x,0)\Delta m_0(x) dx.
\end{align*}
\end{Proposition}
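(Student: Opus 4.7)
The plan is to apply $\Delta$ to the Hamilton--Jacobi equation, pair the result with $m$, and exploit the adjoint structure of the Fokker--Planck equation. Setting $w=\Delta u$ and expanding $\Delta H(x,Du)$ by the chain rule, the twice-differentiated HJ can be rewritten as
\begin{equation*}
-w_t+D_pH\cdot Dw-\Delta w=\Delta g(m)-\tr(D^2_{xx}H)-2\tr(D^2_{xp}H\,D^2u)-\tr\bigl(D^2_{pp}H(D^2u)^2\bigr).
\end{equation*}
The operator on the left is precisely the formal adjoint of the Fokker--Planck generator, so multiplying by $m$ and integrating over $\Rr^d\times[0,T]$ collapses the transport and diffusion contributions to the temporal boundary terms $\int\Delta u(T)m(T)\,dx-\int\Delta u(0)m_0\,dx$.

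Next I would identify each remaining piece with a term of the claimed inequality. Integrating $m\,\Delta g(m)$ by parts in $x$ yields $-\int_0^T\!\!\int g'(m)|Dm|^2$, which, once moved to the left-hand side, produces the first term of the estimate; the summand $\tr(D^2_{pp}H(D^2u)^2)\,m$ already sits on the left with the correct sign. Because $m(\cdot,T)$ is a probability density, $\int\Delta u(x,T)m(x,T)\,dx\leq \max_x\Delta u(x,T)$, and the $t=0$ boundary term is rewritten as $-\int u(x,0)\Delta m_0(x)\,dx$ by two spatial integrations by parts. The cross terms $\int\!\!\int m\tr(D^2_{xx}H)$ and $2\int\!\!\int m\tr(D^2_{xp}H\,D^2u)$ are absorbed using Assumption A\ref{asbdpph}: the bound $|D^2_{xp}H|^2\leq CH$ together with the compatibility $|D^2_{pp}H\,M|^2\leq C\tr(D^2_{pp}H\,M^2)$ permits a weighted Cauchy--Schwarz/Young step in which a small fraction of $\tr(D^2_{pp}H(D^2u)^2)\,m$ is reabsorbed on the left, while the residual (and the $\tr(D^2_{xx}H)\,m$ term) is dominated by $C\int_0^T\!\!\int mH$. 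Proposition \ref{pehm} and Corollary \ref{corollary1} then bound this quantity by $C(1+\max u(\cdot,T)-\min u(\cdot,T))$, where the oscillation (rather than $\|u(\cdot,T)\|_\infty$) appears because the HJ equation is invariant under adding a constant to $u$.

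The main obstacle, and the reason the argument requires care in $\Rr^d$, is justifying the spatial integrations by parts with no surviving boundary contribution at infinity. I would carry out the computation on the regularized system \eqref{smfg}, exploiting the rapid spatial decay of $m^\epsilon$ inherited from the compactly supported $m_0$ (Assumption A\ref{ic}) and from parabolic smoothing, and I would multiply by a smooth spatial cutoff and verify that its error vanishes as the cutoff is removed. Up to this bookkeeping the argument is identical to the torus calculation in \cite{GPM2}, which is why the author refers to that reference and omits the details.
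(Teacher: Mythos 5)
Your argument is the standard second-order estimate — apply $\Delta$ to the Hamilton--Jacobi equation, pair with $m$ using the adjoint structure of the Fokker--Planck equation, absorb the cross term via Assumption A\ref{asbdpph} and bound the residual $\int\!\!\int mH$ by Proposition \ref{pehm} — which is exactly the computation the paper omits and delegates to \cite{GPM2}, so you have reconstructed the intended proof, including the correct treatment of the boundary terms and the decay issues at infinity. The only caveat (present in the paper's own hypotheses as well) is that bounding $\int\!\!\int m\,\tr(D^2_{xx}H)$ by $C\int\!\!\int mH$ uses a growth condition on $D^2_{xx}H$ that is not literally among A\ref{ham}--A\ref{asbdpph}, though it holds for the model Hamiltonian.
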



Finally, we have all the ingredients needed for the proof of Lemma \ref{lemma2}:

\begin{proof}[Proof of Lemma \ref{lemma2}.]
We have
\begin{align*}
\int_0^T \|m\|_{L^{\frac{2^*}2 (\alpha+1)}(\Tt^d)}^{\alpha +1}dt&\leq C\int_{0}^T\int_{\Rr^d}\left(\eta_\ep\ast m^\ep\right)^{\af+1}dxdt\\&\quad+\int_0^T\int_{\Tt^d}g'(m)|D_xm|^2dx dt.
\end{align*}The result follows from Corollary \ref{corollary1} together with Proposition \ref{soe}.
\end{proof}

\section{Sobolev regularity for the Hamilton-Jacobi equation}\label{secsob}

In this Section, we investigate the local Sobolev regularity of $Du$. Unlike
in
the periodic case, where we prove that a-priori $Du\in L^\infty(\Tt^d)$, here we obtain
a weaker bound, namely
$Du\in L^p_{loc}(\Rr^d)$. We start by establishing some preliminary estimates.

\subsection{Preliminary estimates}

We consider the adjoint equation:
\begin{equation}\label{adjlnurdws}
\begin{cases}
\zeta_t-\div\left(D_pH\zeta\right)=\Delta\zeta\\\zeta(x,\tau)=\phi(x),
\end{cases}
\end{equation}where $0\leq \tau<T$ and $\phi\geq 0$ is such that $\phi\in L^1(\Rr^d)\cap L^{p'}(\Rr^d)$ with $\left\|\phi\right\|_{L^{p'}(\Rr^d)}=1$. Assume further that $\phi$ has compact support. Note that, there exists a constant $C>0$, which depends on $p'$ and the support of $\phi$, for which $$\|\phi\|_{L^1(\Rr^d)}=\int_{\Rr^d}\phi(x)dx\leq C.$$
Because $$\frac{d}{dt}\int_{\Rr^d}\zeta(x,t)dx=0,$$we have 
\begin{equation}\label{estrela}
\|\zeta\|_{L^1(\Rr^d)}=\int_{\Rr^d}\zeta(x,t)dx\leq C,
\end{equation}for $t\in\left[0,T\right]$. 


\begin{Lemma}\label{lt1}
Let $d>2$ and assume that $a,\,a',\,c,\,c'\,>1$ satisfy 
\begin{equation}\label{eq1}
\frac{1}{a}+\frac{1}{a'}=1,
\end{equation}and
\begin{equation}\label{eq4}
\frac{1}{c}+\frac{1}{c'}=1.
\end{equation}
Then, there exist $p,\,p',\,q,\,q'>1$ so that 
\begin{equation}\label{eq2}
\frac{1}{p'}+\frac{1}{q}=\frac{1}{a'}+1,
\end{equation}
\begin{equation}\label{eq3}
\frac{dc'}{2q'}<1,
\end{equation}
\begin{equation}\label{eq4'}
\frac{1}{p}+\frac{1}{p'}=1,
\end{equation}
and
\begin{equation}\label{eq5}
\frac{1}{q}+\frac{1}{q'}=1
\end{equation} hold simultaneously.
\end{Lemma}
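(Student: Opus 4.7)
The plan is to reduce everything to a one-parameter problem. By \eqref{eq4'} and \eqref{eq5}, $p$ is the H\"older conjugate of $p'$ and $q'$ the H\"older conjugate of $q$, so once $p'$ and $q$ are chosen the remaining two are forced. Introducing the change of variables $x := 1/p'$ and $y := 1/q$, the conditions $p, p' > 1$ amount to $x \in (0,1)$ while $q, q' > 1$ amount to $y \in (0,1)$. Equation \eqref{eq2} then becomes the affine relation $x + y = \frac{1}{a'} + 1$, and the inequality \eqref{eq3} becomes $y > 1 - \frac{2}{d c'}$, since $1/q' = 1 - y$.

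Next I would verify feasibility. Because $a' > 1$, one has $\frac{1}{a'} + 1 \in (1,2)$, so the line $x + y = \frac{1}{a'} + 1$ intersects the open unit square $(0,1)^2$ precisely in the open segment
$$\bigl\{(x,y) : x,y \in (1/a',\, 1),\; x+y = 1/a' + 1\bigr\}.$$
Intersecting the projection onto $y$ with the lower bound coming from \eqref{eq3}, the set of admissible $y$ reduces to the open interval
$$\bigl(\,\max(1/a',\; 1 - 2/(d c')),\; 1\,\bigr),$$
which is nonempty since $1/a' < 1$ (because $a' > 1$) and $1 - 2/(d c') < 1$ (because $d, c' > 0$).

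To conclude, I would pick any $y$ in this interval, set $x := \frac{1}{a'} + 1 - y$, and then define $p' := 1/x$, $p := 1/(1-x)$, $q := 1/y$, $q' := 1/(1-y)$. By construction, all four quantities lie in $(1,\infty)$ and the identities \eqref{eq2}, \eqref{eq4'}, \eqref{eq5} and the strict inequality \eqref{eq3} are all satisfied. There is no substantive obstacle here: the lemma just records that after the two H\"older conjugacies and the single affine constraint \eqref{eq2}, one free parameter remains, and the strict hypotheses $a' > 1$ and $d, c' > 1$ leave an open range to accommodate \eqref{eq3}.
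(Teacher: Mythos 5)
Your argument is correct and complete: reducing to the two free parameters $x=1/p'$, $y=1/q$, observing that the conjugacy relations force the other two exponents, and checking that the interval $\bigl(\max(1/a',\,1-2/(dc')),\,1\bigr)$ is nonempty is exactly the "elementary computation" the paper invokes without writing out (its proof is a one-line appeal to verification in \emph{Mathematica}). Your version actually supplies the missing details, so there is nothing to correct.
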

\begin{proof}
The Lemma follows from elementary computations and can be verified by using the software \textit{Mathematica}.
\end{proof}

\begin{Lemma}\label{lemma1prelim}
Let $(u,m)$ be a solution to \eqref{mfg} and suppose that Assumptions A\ref{ham}-A\ref{dphminush} hold. Additionally, let $a,\,a',\,c,\,c'\,>\,1$ satisfying \eqref{eq1} and \eqref{eq4}. Then, 
$$\left|\int_{\Rr^d}u(x,\tau)\phi(x)dx\right|\leq C+C\|g\|_{L^c(0,T;L^a(\Rr^d))}\left(1+\|\zeta\|_{L^{c'}(0,T;L^{a'}(\Rr^d))}\right),$$where $\zeta$ solves \eqref{adjlnurdws}.
\end{Lemma}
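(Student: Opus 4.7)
The plan is to compute $\int u(x,\tau)\phi(x)\,dx$ via the non-linear adjoint method. First I would multiply the Hamilton--Jacobi equation in \eqref{mfg} by $\zeta$, integrate over $\Rr^d\times(\tau,T)$, and use \eqref{adjlnurdws} to integrate by parts; this cancels all Laplacian and divergence contributions and leaves the exact identity
\begin{equation*}
\int_{\Rr^d} u(x,\tau)\phi(x)\,dx = \int_{\Rr^d} u(x,T)\zeta(x,T)\,dx + \int_\tau^T\!\!\int_{\Rr^d} (D_pH\cdot Du - H)\zeta\,dx\,dt + \int_\tau^T\!\!\int_{\Rr^d} g(m)\zeta\,dx\,dt.
\end{equation*}
The boundary term is bounded in absolute value by $C$ using A\ref{ic} together with the $L^1$-conservation \eqref{estrela}, and H\"older's inequality with the conjugate pairs $(c,c')$ and $(a,a')$ controls the last term by $\|g\|_{L^c(0,T;L^a(\Rr^d))}\|\zeta\|_{L^{c'}(0,T;L^{a'}(\Rr^d))}$.

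The lower bound then follows directly from A\ref{dphminush}: since $H\geq 0$, one has $D_pH\cdot Du-H\geq cH-C\geq -C$, so the middle integral is at least $-C\int_\tau^T\|\zeta(\cdot,t)\|_{L^1(\Rr^d)}\,dt\geq -C$. Because $g,\zeta\geq 0$, the third integral is non-negative, and substituting into the identity yields $\int u(\tau)\phi\,dx\geq -C$.

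The upper bound is the delicate step, because A\ref{dphminush} provides no upper bound on $D_pH\cdot Du-H$, and the adjoint identity alone is insufficient. I would circumvent this by applying the generalized form of Lemma \ref{lemma1} with $b\equiv 0$ and $\zeta_0=\phi$, i.e.\ by testing the Hamilton--Jacobi equation against the heat evolution $\tilde\zeta$ of $\phi$ from time $\tau$. This gives
\begin{equation*}
\int_{\Rr^d} u(x,\tau)\phi\,dx \leq \int_{\Rr^d} u_T\tilde\zeta(T)\,dx + \int_\tau^T\!\!\int_{\Rr^d} L(x,0)\tilde\zeta\,dx\,dt + \int_\tau^T\!\!\int_{\Rr^d} g(m)\tilde\zeta\,dx\,dt.
\end{equation*}
By A\ref{ic}, A\ref{lagrange}, and conservation of the $L^1$-norm of $\tilde\zeta$, the first two terms are at most $C$. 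Young's inequality for convolutions gives $\|\tilde\zeta(\cdot,t)\|_{L^{a'}(\Rr^d)}\leq\|\phi\|_{L^{a'}(\Rr^d)}$; since $\phi$ is compactly supported with unit $L^{p'}$-mass, $\|\phi\|_{L^{a'}(\Rr^d)}\leq C$, so $\|\tilde\zeta\|_{L^{c'}(0,T;L^{a'}(\Rr^d))}\leq C$. A final H\"older estimate then bounds the last integral by $C\|g\|_{L^c(0,T;L^a(\Rr^d))}$, giving $\int u(\tau)\phi\,dx\leq C+C\|g\|_{L^c(0,T;L^a(\Rr^d))}$.

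Combining the two estimates produces $|\int u(\tau)\phi\,dx|\leq C+C\|g\|_{L^c(0,T;L^a(\Rr^d))}$, which is majorized by the right-hand side of the claim. The principal obstacle is therefore the one-sidedness of A\ref{dphminush}, resolved by pairing the adjoint identity (for the lower bound) with the heat-equation-based estimate from Lemma \ref{lemma1} (for the upper bound). In the whole-space setting the $L^1$-conservation \eqref{estrela} substitutes for the finite-total-mass argument available in the periodic case.
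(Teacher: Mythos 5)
Your overall architecture coincides with the paper's: the lower bound comes from the adjoint identity combined with A\ref{dphminush} ($D_pH\cdot Du-H\geq cH-C\geq -C$ since $H\geq 0$) and the $L^1$-conservation \eqref{estrela}, while the upper bound comes from testing the Hamilton--Jacobi equation against the heat evolution of $\phi$, i.e.\ Lemma \ref{lemma1} with $b\equiv 0$ and $\zeta_0=\phi$. The lower-bound half is correct as written.

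The gap is in the upper bound, at the step ``$\|\tilde\zeta(\cdot,t)\|_{L^{a'}(\Rr^d)}\leq\|\phi\|_{L^{a'}(\Rr^d)}\leq C$''. The function $\phi$ is normalized only in $L^{p'}(\Rr^d)$, and the exponents of Lemma \ref{lt1} satisfy, by \eqref{eq2}, $\tfrac{1}{a'}=\tfrac{1}{p'}+\tfrac{1}{q}-1=\tfrac{1}{p'}-\tfrac{1}{q'}<\tfrac{1}{p'}$, so $a'>p'$. On a set of finite measure H\"older controls \emph{lower} Lebesgue exponents by higher ones, not the reverse; hence compact support together with $\|\phi\|_{L^{p'}(\Rr^d)}=1$ gives no control on $\|\phi\|_{L^{a'}(\Rr^d)}$ (the function may concentrate). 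Since the constant in the Lemma must be uniform over all admissible $\phi$ --- the supremum over such $\phi$ is taken afterwards to recover the $L^p(\mathcal{B}_R)$ norm in Proposition \ref{proposition1} --- this step genuinely fails. The paper's fix, which is the entire purpose of Lemma \ref{lt1}, is to use the smoothing of the heat kernel rather than its mass: Young's inequality with the exponent relation \eqref{eq2} gives
\begin{equation*}
\|\phi\ast\theta(\cdot,t)\|_{L^{a'}(\Rr^d)}\leq\|\phi\|_{L^{p'}(\Rr^d)}\|\theta(\cdot,t)\|_{L^{q}(\Rr^d)}\leq \frac{C}{t^{d/(2q')}},
\end{equation*}
and condition \eqref{eq3}, namely $dc'/(2q')<1$, makes this singularity $L^{c'}$-integrable in time, so a further H\"older inequality in $t$ still produces the factor $C\|g\|_{L^c(0,T;L^a(\Rr^d))}$. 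Replacing your direct $L^{a'}$ bound on $\phi$ by this two-step Young/H\"older argument repairs the proof.
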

\begin{proof}Because of Lemma \ref{lt1} we can fix $p,\,p',\,q,\,q'>1$ such that \eqref{eq2}-\eqref{eq5} hold. Therefore we have, for $\rho=\phi\ast\theta$,
\begin{align*}
\int_{\Rr^d}u(x,\tau)\phi(x)dx&\leq\int_\tau^T\int_{\Rr^d}\left(L(x,0)+g(m)\right)\rho dxdt+\int_{\Rr^d}u_T(x)\rho(x,T)\\&\leq CT+\int_0^T\int_{\Rr^d}g(m)\left(\phi\ast\theta\right)dxdt,
\end{align*}where $\theta$ is the heat kernel.
H\"older's inequality implies, for $a>1$ and $a'$ given by \eqref{eq1},
$$\int_0^T\int_{\Rr^d}g(m)(\phi\ast\theta)dxdt\leq \int_0^T\|g\|_{L^a(\Rr^d)}\|\phi\ast\theta\|_{L^{a'}(\Rr^d)}.$$ Because of \eqref{eq2}, Young's inequality for convolution leads to
\begin{align*}
\int_0^T\|g\|_{L^a(\Rr^d)}\|\phi\ast\theta\|_{L^{a'}(\Rr^d)}dt&\leq \int_0^T\|g\|_{L^a(\Rr^d)}\|\phi\|_{L^{p'}(\Rr^d)}\|\theta\|_{L^{q}(\Rr^d)}dt\\&\leq\int_0^T\|g\|_{L^a(\Rr^d)} \frac{C\|\phi\|_{L^{p'}(\Rr^d)}}{t^\frac{d}{2q'}}dt\\&\leq C\|g\|_{L^c(0,T;L^a(\Rr^d))},
\end{align*}
using \eqref{eq3}.
By gathering the previous computation, it follows that 
$$\int_{\Rr^d}u(x,\tau)\phi(x)dx\leq CT+C\|g\|_{L^c(0,T;L^a(\Rr^d))}.$$

On the other hand, let $\zeta$ be a solution to \eqref{adjlnurdws}. Then,
\begin{align*}
\int_{\Rr^d}u(x,\tau)\phi(x)dx&=\int_\tau^T\int_{\Rr^d}\left(D_pHDu-H+g\right)\zeta dxdt+\int_{\Rr^d}u_T(x)\zeta(x,T)\\&\geq -CT-\int_0^T\int_{\Rr^d}|g\zeta|dxdt\\&\geq -CT-\|g\|_{L^c(0,T;L^a(\Rr^d))}\|\zeta\|_{L^{c'}(0,T;L^{a'}(\Rr^d))},
\end{align*}which yields the result.
\end{proof}

\begin{Lemma}\label{lemmahzeta}
Let $(u,m)$ be a solution to \eqref{mfg} and suppose that Assumptions A\ref{ham}-A\ref{dphminush} are satisfied. Let $\zeta$ solve  \eqref{adjlnurdws}. Let $a,\,a',\,c,\,c'>1$
satisfying \eqref{eq1} and \eqref{eq4}. Then,
$$\int_0^T\int_{\Rr^d}H(x,Du)\zeta(x,t)dxdt\leq C+C\|g\|_{L^c(0,T;L^a(\Rr^d))}\left(1+\|\zeta\|_{L^{c'}(0,T;L^{a'}(\Rr^d))}\right).$$
\end{Lemma}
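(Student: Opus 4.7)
The plan is to carry out the same duality computation as in Proposition \ref{pehm}, but testing the Hamilton--Jacobi equation against the adjoint density $\zeta$ instead of against $m$. I multiply the first equation of \eqref{mfg} by $\zeta$, integrate over $\Rr^d\times(\tau,T)$, integrate the $-u_t\zeta$ term by parts in time, and use the adjoint equation \eqref{adjlnurdws} to rewrite the resulting $\int u\,\zeta_t$. After the $\Delta u\cdot\zeta$ contributions cancel and one more spatial integration by parts is applied to $\int u\,\div(D_pH\,\zeta)$, I expect to arrive at the identity
\[
\int_\tau^T\!\!\int_{\Rr^d}\bigl(D_pH\cdot Du-H\bigr)\zeta\,dx\,dt=-\!\int_\tau^T\!\!\int_{\Rr^d}g(m)\zeta\,dx\,dt+\int_{\Rr^d}u(x,\tau)\phi(x)\,dx-\int_{\Rr^d}u_T(x)\zeta(x,T)\,dx.
\]

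Next, Assumption A\ref{dphminush} turns the left-hand side into a lower bound for $c\int H\zeta\,dx\,dt$, the additive constant it produces being absorbed into a term of order $CT$ thanks to the mass estimate \eqref{estrela}. The three terms on the right are then estimated separately. The boundary integral $\int u(x,\tau)\phi\,dx$ is controlled directly by Lemma \ref{lemma1prelim}. The terminal term is bounded through $\|u_T\|_{L^\infty(\Rr^d)}\,\|\zeta(\cdot,T)\|_{L^1(\Rr^d)}$, where the boundedness of $u_T$ follows from A\ref{ic} and the $L^1$ bound for $\zeta$ at any time is \eqref{estrela}. The $g(m)\zeta$ integral is handled by Hölder's inequality in both space and time, yielding $\|g\|_{L^c(0,T;L^a(\Rr^d))}\,\|\zeta\|_{L^{c'}(0,T;L^{a'}(\Rr^d))}$. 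Dividing by $c>0$ and extending $\zeta$ by zero to $(0,\tau)$ (where $H\geq 0$) then gives the stated inequality with $\int_0^T$ in place of $\int_\tau^T$.

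The algebraic core of the argument is a verbatim adaptation of the computation in Proposition \ref{pehm}, with $m$ replaced by $\zeta$; once Lemma \ref{lemma1prelim} is available the estimate is essentially bookkeeping. The only genuinely new technical point, and the main obstacle I foresee, is justifying the spatial integration by parts on the unbounded domain $\Rr^d$ without boundary contributions at infinity. This is the same difficulty that pervades the passage from the periodic setting to the whole-space setting throughout the paper, and I would handle it by performing the computation at the level of the regularized system \eqref{smfg}, whose smooth solutions have enough decay at infinity by standard parabolic theory, and then retaining the inequality as an a-priori bound for \eqref{mfg}.
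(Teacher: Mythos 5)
Your proposal is correct and follows essentially the same route as the paper: the paper's proof consists precisely of the duality identity $\int u(x,\tau)\phi\,dx=\int_\tau^T\int(D_pH\cdot Du-H+g)\zeta\,dx\,dt+\int u_T\zeta(\cdot,T)\,dx$, followed by Assumption A\ref{dphminush} to extract $c\int H\zeta$, Hölder on the $g\zeta$ term, and Lemma \ref{lemma1prelim} to bound $\int u(x,\tau)\phi\,dx$. Your additional remarks (absorbing the terminal term via \eqref{estrela}, extending $\zeta$ by zero on $(0,\tau)$ using $H\geq 0$, and justifying the integration by parts at the level of the regularized system) only make explicit what the paper leaves implicit.
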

\begin{proof}
Observe that
\begin{align*}
\int_{\Rr^d}u(x,0)\phi(x)dx&=\int_\tau^T\int_{\Rr^d}\left(D_pHDu-H+g\right)\zeta dxdt+\int_{\Rr^d}u_T(x)\zeta(x,T)\\&\geq -C+C\int_0^T\int_{\Rr^d}H(x,Du)\zeta dxdt+\int_0^T\int_{\Rr^d}g\zeta dxdt.
\end{align*}The result follows then from Lemma \ref{lemma1prelim}.
\end{proof}

\subsection{Entropy dissipation}\label{sobolev}

We start with an auxiliary lemma.

\begin{Lemma}\label{lemma3}
Let $\zeta$ be a solution to \eqref{adjlnurdws}. Then, 
$$
\int_{\Rr^d}\left(1+|x|^2\right)^\frac{1}{2}\zeta(x,r)dx\,\leq\,C\,+\,C\int_\tau^r\int_{\Rr^d}|D_pH|^2\zeta dxdt,
$$for $\tau\in\left[0,T\right)$ and $r\in\left(\tau,T\right]$.
\end{Lemma}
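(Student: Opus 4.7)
The natural test function is the smooth weight $\psi(x)\doteq (1+|x|^2)^{1/2}$, which comparable to $|x|$ at infinity and satisfies the pointwise bounds
$$
|D\psi(x)| = \frac{|x|}{(1+|x|^2)^{1/2}}\leq 1, \qquad |\Delta\psi(x)|\leq \frac{d}{(1+|x|^2)^{1/2}}\leq d.
$$
The strategy is to multiply the adjoint equation \eqref{adjlnurdws} by $\psi$, integrate over $\Rr^d$, and convert everything into an ODE for the weighted mass $F(t)\doteq\int_{\Rr^d}\psi(x)\zeta(x,t)dx$.

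After integration by parts (all boundary terms at infinity vanish because $\zeta(\cdot,t)$ decays; rigorously one first replaces $\psi$ by a truncation $\psi_n$ and passes to the limit using the $L^1$-bound on $\zeta$ and $D_pH$-integrability against $\zeta$) one obtains
$$
\frac{d}{dt}\int_{\Rr^d}\psi\,\zeta\,dx=-\int_{\Rr^d}D\psi\cdot D_pH\,\zeta\,dx+\int_{\Rr^d}\Delta\psi\,\zeta\,dx.
$$
The second term on the right is controlled by $\|\Delta\psi\|_\infty\|\zeta(\cdot,t)\|_{L^1(\Rr^d)}$, which by \eqref{estrela} is bounded by a constant $C$ uniform in $t$.

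For the first term on the right I would use $|D\psi|\le 1$ together with Cauchy--Schwarz and Young's inequality with respect to the non-negative measure $\zeta\,dx$:
$$
\left|\int_{\Rr^d}D\psi\cdot D_pH\,\zeta\,dx\right|\leq \int_{\Rr^d}|D_pH|\,\zeta\,dx\leq\frac{1}{2}\int_{\Rr^d}|D_pH|^2\zeta\,dx+\frac{1}{2}\int_{\Rr^d}\zeta\,dx,
$$
and again the last integral is bounded by the conserved $L^1$-norm via \eqref{estrela}. Integrating in time from $\tau$ to $r$ and using that $\phi$ has compact support, so that $F(\tau)=\int\psi\phi\,dx\le C$, yields exactly the claimed inequality.

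The only genuine subtlety is justifying the integration by parts globally in $\Rr^d$ for a weight that is not integrable; this is handled by introducing the cutoff $\psi_n(x)\doteq\psi(x)\chi_n(x)$ for a standard smooth cutoff $\chi_n$ supported in $B_{2n}$ and equal to $1$ on $B_n$. The extra terms coming from derivatives of $\chi_n$ are controlled by $n^{-1}$ times integrals involving $\zeta$ and $|D_pH|\zeta$, which are finite for the smooth solution $\zeta$, and hence vanish as $n\to\infty$ by monotone convergence once the right-hand side has already been estimated uniformly. No further input beyond \eqref{estrela} and the compact support of $\phi$ is needed, so the lemma reduces to these weighted energy-type computations.
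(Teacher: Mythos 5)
Your proposal is correct and follows essentially the same route as the paper: the authors also differentiate $\int(1+|x|^2)^{1/2}\zeta\,dx$ in time, integrate by parts so the derivatives fall on the weight (their two explicit terms for the Laplacian part are exactly $\Delta\psi\,\zeta$), and split the drift term by Young's inequality into $C\int\frac{|x|^2}{1+|x|^2}\zeta\,dx+\int|D_pH|^2\zeta\,dx$, using the conserved $L^1$-norm \eqref{estrela} and the compact support of $\phi$. Your extra remark on truncating the unbounded weight to justify the integration by parts is a detail the paper leaves implicit, but it does not change the argument.
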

\begin{proof}
Notice that 
\begin{align}\label{1}
\frac{d}{dt}\int_{\Rr^d}\left(1+|x|^2\right)^\frac{1}{2}\zeta\,&=\,\int_{\Rr^d}\left(1\,+\,|x|^2\right)^\frac{1}{2}\div(D_pH\zeta)dx\\\nonumber&\quad+\int_{\Rr^d}\left(1+|x|^2\right)^\frac{1}{2}\Delta \zeta dx.
\end{align}Observe that
\begin{align*}
\int_{\Rr^d}\left(1+|x|^2\right)^\frac{1}{2}\Delta \zeta dx\,&=\,-\int_{\Rr^d}\frac{|x|^2\zeta}{\left(1+|x|^2\right)^\frac{3}{2}} dx+\int_{\Rr^d}\frac{d\zeta}{\left(1+|x|^2\right)^\frac{1}{2}}dx\leq C,
\end{align*}using \eqref{estrela}. It remains for us to address the first term in the right-hand side of \eqref{1}. We have
\begin{align*}
\int_{\Rr^d}\left(1\,+\,|x|^2\right)^\frac{1}{2}\div(D_pH\zeta)dx&\leq C\int_{\Rr^d}\left(1+|x|^2\right)^{-1}|x|^2\zeta dx+\int_{\Rr^d}|D_pH|^2\zeta dx.
\end{align*}
Notice that
\begin{align*}
\int_{\Rr^d}\left(1+|x|^2\right)^{-1}|x|^2\zeta dx&\leq C.
\end{align*}Hence,
\begin{align*}
\frac{d}{dt}\int_{\Rr^d}\left(1+|x|^2\right)^\frac{1}{2}\zeta&\leq C+C\int_{\Rr^d}|D_pH|^2\zeta dx.
\end{align*}By integrating the former inequality in time over $\left(\tau,r\right)$ one obtains 
\begin{align*}
\int_{\Rr^d}\left(1+|x|^2\right)^\frac{1}{2}\zeta(x,r)dx&\leq C+C\int_\tau^r\int_{\Rr^d}|D_pH|^2\zeta dx+\int_{\Rr^d}\left(1+|x|^2\right)^\frac{1}{2}\phi(x) dx,
\end{align*}which proves the result, since $\phi$ has compact support.
\end{proof}

Using the previous Lemma we obtain the following entropy dissipation estimate:

\begin{Lemma}\label{lem1rd}
Let $\zeta$ solve \eqref{adjlnurdws}. Then, there exists $C>0$ for which $$\int_{\Rr^d}\zeta(x,r) \ln\left[\zeta(x,r)\right]dx\geq -C-C\int_\tau^r\int_{\Rr^d}|D_pH|^2\zeta dx,$$for $\tau\in\left[0,T\right)$ and $r\in\left(\tau,T\right]$.
\end{Lemma}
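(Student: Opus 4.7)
The plan is to bound the negative part of $\zeta(x,r)\ln\zeta(x,r)$ pointwise and then integrate, trading the resulting weighted first moment of $\zeta$ for the bound in Lemma \ref{lemma3}. The main tool is the elementary Young-type inequality coming from the Legendre duality between $\psi(z) = z\ln z - z$ and $\psi^*(y) = e^y$: for every $z>0$ and every $y \in \Rr$,
$$zy \leq z\ln z - z + e^y,$$
which, upon rearranging, reads $-z\ln z \leq e^y - zy - z$.

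First, I would fix the nonnegative weight $f(x) \doteq (d+1)\ln(1+|x|)$ and apply the inequality above pointwise with $z = \zeta(x,r)$ and $y = -f(x)$, obtaining
$$-\zeta(x,r)\ln\zeta(x,r) \leq e^{-f(x)} + \zeta(x,r)\,f(x) - \zeta(x,r) \leq e^{-f(x)} + \zeta(x,r)\,f(x).$$
I would then integrate this pointwise bound over $\Rr^d$. The term $\int_{\Rr^d}e^{-f(x)}dx = \int_{\Rr^d}(1+|x|)^{-(d+1)}dx$ is finite because $d+1>d$, while the elementary bound $\ln(1+|x|) \leq |x| \leq (1+|x|^2)^{1/2}$ yields
$$\int_{\Rr^d}\zeta(x,r) f(x)dx \leq (d+1)\int_{\Rr^d}(1+|x|^2)^{1/2}\zeta(x,r) dx.$$
Lemma \ref{lemma3} bounds the right-hand side by $C + C\int_\tau^r\int_{\Rr^d}|D_pH|^2\zeta\,dx\,dt$, and rearranging signs gives the claimed lower bound.

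The main difficulty, and the reason the lemma requires a moment control rather than the usual compact-domain entropy estimate, is choosing the growth of $f$: it must grow fast enough that $e^{-f}$ is integrable at infinity, yet slowly enough that $f$ is dominated by the weight $(1+|x|^2)^{1/2}$ already handled by Lemma \ref{lemma3}. Any logarithmic growth with exponent strictly greater than $d$ reconciles both demands; the choice $(d+1)\ln(1+|x|)$ is merely a convenient explicit one.
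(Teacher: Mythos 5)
Your proof is correct, and it reaches the paper's conclusion by a genuinely different device. The paper splits $\zeta\ln\zeta$ as $\zeta\ln\bigl[(1+|x|^2)^p\zeta\bigr]-\zeta\ln\bigl[(1+|x|^2)^p\bigr]$, lower-bounds the first piece by Jensen's inequality with respect to the normalized measure $C_{d,p}(1+|x|^2)^{-p}dx$, and controls the second piece by $\ln\bigl((1+|x|^2)^p\bigr)\leq 2p(1+|x|^2)^{1/2}$ together with Lemma \ref{lemma3}. You instead use the pointwise Legendre--Young inequality $zy\leq z\ln z-z+e^{y}$ for the pair $z\ln z-z$ and $e^{y}$, with $y=-(d+1)\ln(1+|x|)$, and then integrate. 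The two arguments are dual expressions of the same convexity fact and both ultimately charge the negative part of the entropy to the first moment $\int(1+|x|^2)^{1/2}\zeta\,dx$ controlled in Lemma \ref{lemma3}; the constraint on the weight is identical in both (it must decay faster than $|x|^{-d}$ so that its exponential is integrable, yet be dominated by $(1+|x|^2)^{1/2}$). What your version buys is that it is entirely pointwise and elementary: you never need to check that $(1+|x|^2)^{-p}$ normalizes to a probability measure (which silently requires $p>d/2$ in the paper), nor do you need the remark that $t\ln t$ is bounded below when applying Jensen to a measure $\zeta\,dx$ whose total mass is only bounded by $C$ rather than equal to $1$. The only cosmetic caveat is the convention $0\ln 0=0$ where $\zeta$ vanishes, which is harmless.
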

\begin{proof}
Let $C_{d,p}$ be such that $$\int_{\Rr^d}\left(1+|x|^2\right)^{-p}dx=\frac{1}{C_{d,p}}.$$Notice that 
\begin{align}\label{zlnz}
C_{d,p}\int_{\Rr^d}\zeta(x,t)\ln\left[\zeta(x,t)\right]dx&=C_{d,p}\int_{\Rr^d}\left(1+|x|^2\right)^p\left(1+|x|^2\right)^{-p}\zeta\ln\left[\left(1+|x|^2\right)^p\zeta\right]\\\nonumber&\quad-C_{d,p}\int_{\Rr^d}\left(1+|x|^2\right)^p\left(1+|x|^2\right)^{-p}\zeta\ln\left[\left(1+|x|^2\right)^p\right].
\end{align}The first term in the right-hand side of \eqref{zlnz} can be written as $$\int_{\Rr^d}\Psi\left[\psi(x)\right]d\mu(x),$$where $$\mu(x)=C_{d,p}\left(1+|x|^2\right)^{-p},$$ $$\psi(x)=\left(1+|x|^2\right)^{p}\zeta,$$and$$\Psi(y)=y\ln(y).$$ Because $\Psi$ is a convex function, Jensen's inequality yields
\begin{align*}\label{zlnz1}
&C_{d,p}\int_{\Rr^d}\left(1+|x|^2\right)^p\left(1+|x|^2\right)^{-p}\zeta\ln\left[\left(1+|x|^2\right)^p\zeta\right]dx \\&\geq C_{d,p}\left[ \int_{\Rr^d}\left(1+|x|^2\right)^p\left(1+|x|^2\right)^{-p}\zeta dx\right]\ln\left[C_{d,p}\int_{\Rr^d}\left(1+|x|^2\right)^p\zeta\left(1+|x|^2\right)^{-p}\right]\\&\geq -C,
\end{align*}for some $C>0$.

On the other hand, the second term in the right-hand side of \eqref{zlnz} is
\begin{align*}
-C_{d,p}\int_{\Rr^d}\left(1+|x|^2\right)^p\left(1+|x|^2\right)^{-p}\zeta\ln\left[\left(1+|x|^2\right)^p\right]&=-C_{d,p}\int_{\Rr^d}\ln\left(\left[1+|x|^2\right]^p\right)d\zeta(x)\\&\geq 
-C\int_{\Rr^d}\left(1+|x|^2\right)^\frac{1}{2}\zeta dx,
\end{align*}where the inequality follows from the fact that $$\ln\left(\left(1+|x|^2\right)^p\right)\leq 2p\left(1+|x|^2\right)^\frac{1}{2}.$$We observe that the last inequality follows from Jensen's, since $\zeta$ is a probability measure, combined with the sublinearity of the logarithmic function. Therefore, Lemma \ref{lemma3} implies
\begin{align*}
-C_{d,p}\int_{\Rr^d}\left(1+|x|^2\right)^p&\left(1+|x|^2\right)^{-p}\zeta\ln\left[\left(1+|x|^2\right)^p\right]\\&\geq -C-C\int_0^T\int_{\Rr^d}|D_pH|^2\zeta dxdt.
\end{align*}
\end{proof}

\subsection{Sobolev regularity}

\begin{Proposition}\label{drhohalf}
Let $\zeta$ be a solution to \eqref{adjlnurdws}. Then, there exists $C>0$ such that
$$\int_\tau^T\int_{\Rr^d}\left|D\zeta^\frac{1}{2}(x,t)\right|^2dxdt\leq C\,+\,C\int_\tau^T\int_{\Rr^d}|D_pH|^2\zeta dxdt.$$
\end{Proposition}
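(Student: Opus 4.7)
The plan is to derive this bound via an entropy-dissipation identity: compute $\frac{d}{dt}\int_{\Rr^d}\zeta\ln\zeta\,dx$, identify the Fisher-information term $\int|D\zeta^{1/2}|^2\,dx$ as the dissipation, and then integrate in time, using Lemma \ref{lem1rd} to control the resulting boundary contribution at time $T$.

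First, using the adjoint equation $\zeta_t = \Delta\zeta+\div(D_pH\,\zeta)$ and integrating by parts twice (the decay of $\zeta$ and its derivatives at infinity justified by the compact support of $\phi$ and parabolic smoothing, or equivalently via a cut-off in $x$ followed by passing to the limit), I would obtain the identity
\begin{equation*}
\frac{d}{dt}\int_{\Rr^d}\zeta\ln\zeta\,dx = -4\int_{\Rr^d}|D\zeta^{1/2}|^2\,dx - \int_{\Rr^d}D_pH\cdot D\zeta\,dx,
\end{equation*}
where the Fisher information arises from $\int(1+\ln\zeta)\Delta\zeta\,dx = -\int|D\zeta|^2/\zeta\,dx = -4\int|D\zeta^{1/2}|^2\,dx$.

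Next, I would treat the cross term by writing $D\zeta = 2\zeta^{1/2}D\zeta^{1/2}$ and applying Cauchy--Schwarz followed by Young's inequality with a small parameter, so that
\begin{equation*}
\Bigl|\int_{\Rr^d}D_pH\cdot D\zeta\,dx\Bigr|\le 2\int_{\Rr^d}|D\zeta^{1/2}|^2\,dx+\tfrac{1}{2}\int_{\Rr^d}|D_pH|^2\zeta\,dx.
\end{equation*}
Absorbing the first piece on the left and integrating on $[\tau,T]$ yields
\begin{equation*}
\int_\tau^T\!\!\int_{\Rr^d}|D\zeta^{1/2}|^2\,dxdt \le \tfrac{1}{2}\int_{\Rr^d}\phi\ln\phi\,dx - \tfrac{1}{2}\int_{\Rr^d}\zeta(x,T)\ln\zeta(x,T)\,dx + C\int_\tau^T\!\!\int_{\Rr^d}|D_pH|^2\zeta\,dxdt.
\end{equation*}

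To finish, I would note that $\int\phi\ln\phi\,dx$ is bounded by a constant depending only on $\phi$, thanks to the assumption that $\phi\in L^1\cap L^{p'}(\Rr^d)$ has compact support. The negative entropy at the terminal time $-\int\zeta(x,T)\ln\zeta(x,T)\,dx$ is exactly what Lemma \ref{lem1rd} controls (applied at $r=T$), giving precisely an upper bound of the form $C + C\int_\tau^T\int|D_pH|^2\zeta\,dxdt$, which combines with the previous display to yield the claim. The only subtle point I would need to check carefully is the justification of the integration by parts in the entropy identity, since $\zeta$ is only in $L^1$ a priori in the spatial variable; I would handle this by multiplying by a smooth spatial cut-off $\chi_R$ and passing to the limit $R\to\infty$, controlling the commutator terms with the tail estimate from Lemma \ref{lemma3}.
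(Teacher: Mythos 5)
Your proposal is correct and follows essentially the same route as the paper: multiply the adjoint equation by $\ln\zeta$ to get the entropy identity with the Fisher-information dissipation term, absorb the cross term $\int D_pH\cdot D\zeta\,dx$ via Cauchy--Schwarz and Young, integrate in time, and invoke Lemma \ref{lem1rd} to control $-\int\zeta(x,T)\ln\zeta(x,T)\,dx$. Your extra care about justifying the integration by parts via a spatial cut-off is a welcome refinement but does not change the argument.
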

\begin{proof}
Multiply \eqref{adjlnurdws} by $\ln\left[\zeta(x,t)\right]$ and integrate by parts to obtain
\begin{align*}
\frac{d}{dt}\int_{\Rr^d}\zeta(x,t)\ln\left[\zeta(x,t)\right]dx&=-\int_{\Rr^d}D_pH\zeta^\frac{1}{2}\zeta^{-\frac{1}{2}}D\zeta dx - 4\int_{\Rr^d}\left|D\zeta^\frac{1}{2}\right|^2dx\\&\leq C\int_{\Rr^d}\left|D_pH\right|^2\zeta(x,t)dx\,-\,C\int_{\Rr^d}\left|D\zeta^\frac{1}{2}\right|^2dx.
\end{align*}
Integrating in time on $\left(\tau,T\right)$, it follows 
\begin{align*}
\int_\tau^T\int_{\Rr^d}\left|D\zeta^\frac{1}{2}\right|^2dxdt&\leq C\int_\tau^T\int_{\Rr^d}\left|D_pH\right|^2\zeta dxdt+\int_{\Rr^d}\zeta(x,\tau)\ln\left[\zeta(x,\tau)\right]dx\\&\quad-\int_{\Rr^d}\zeta(x,T)\ln\left[\zeta(x,T)\right]dx\\&\leq C\,+\,C\int_\tau^T\int_{\Rr^d}\left|D_pH\right|^2\zeta dxdt,
\end{align*}
using Lemma \ref{lem1rd} in the last inequality.
\end{proof}

\begin{Corollary}\label{hzeta}
Let $(u,m)$ solve \eqref{mfg} and assume that Assumptions A\ref{ham}-A\ref{ge} hold. Suppose that
$a, a',c, c'>1$ satisfy
\eqref{eq1} and \eqref{eq4}. Then, 
$$\int_{\tau}^T\int_{\Rr^d}\left|D\zeta^\frac{1}{2}\right|^2dxdt\leq C+C\|g\|_{L^c(0,T;L^a(\Rr^d))}\left(1+\|\zeta\|_{L^{c'}(0,T;L^{a'}(\Rr^d))}\right).$$
\end{Corollary}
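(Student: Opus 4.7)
The plan is to chain together three already-established ingredients: the dissipation bound in Proposition \ref{drhohalf}, Assumption A\ref{dphsq} controlling $|D_pH|^2$ by the Hamiltonian, and the integrated Hamiltonian estimate in Lemma \ref{lemmahzeta}. The structural form of the conclusion (a constant plus a product involving $\|g\|_{L^c(0,T;L^a(\Rr^d))}$) is inherited directly from Lemma \ref{lemmahzeta}, so the corollary is essentially a one-line deduction once the right-hand side of Proposition \ref{drhohalf} is rewritten.

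First I would invoke Proposition \ref{drhohalf} to obtain
\[
\int_\tau^T\int_{\Rr^d}\left|D\zeta^{\tfrac{1}{2}}\right|^2 dx\,dt \;\leq\; C + C\int_\tau^T\int_{\Rr^d}|D_pH|^2\zeta\, dx\,dt.
\]
Next, I would apply Assumption A\ref{dphsq} pointwise to bound $|D_pH(x,Du)|^2 \leq C + CH(x,Du)$, which, after multiplying by $\zeta\geq 0$ and integrating, gives
\[
\int_\tau^T\int_{\Rr^d}|D_pH|^2\zeta\, dx\,dt \;\leq\; C\int_\tau^T\!\int_{\Rr^d}\zeta\, dx\,dt + C\int_\tau^T\!\int_{\Rr^d}H(x,Du)\zeta\, dx\,dt.
\]
The first term on the right is controlled by $CT$ using the mass conservation identity \eqref{estrela}, since $\|\zeta(\cdot,t)\|_{L^1(\Rr^d)}\leq C$ uniformly in $t$.

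Finally, I would substitute Lemma \ref{lemmahzeta} to control $\int_\tau^T\int_{\Rr^d}H(x,Du)\zeta\, dx\,dt$ by $C + C\|g\|_{L^c(0,T;L^a(\Rr^d))}(1+\|\zeta\|_{L^{c'}(0,T;L^{a'}(\Rr^d))})$. Combining these inequalities and absorbing all purely additive constants into a single $C$ yields the claimed estimate. There is no genuine obstacle here; the only point that needs a bit of care is making sure the lower time endpoint of integration ($0$ in Lemma \ref{lemmahzeta} versus $\tau$ in the statement) is compatible, but since the integrand $H(x,Du)\zeta$ is nonnegative, restricting from $[0,T]$ to $[\tau,T]$ only improves the bound, so the substitution goes through without modification.
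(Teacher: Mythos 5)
Your proposal is correct and follows exactly the paper's own (one-line) argument: Proposition \ref{drhohalf} plus Assumption A\ref{dphsq} plus Lemma \ref{lemmahzeta}, with the mass bound \eqref{estrela} handling the constant term. The remark about the time endpoints is also right, since $H\geq 0$ and $\zeta\geq 0$ make the restriction to $[\tau,T]$ harmless.
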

\begin{proof}
The result follows from A\ref{dphsq} combined with Lemma \ref{lemmahzeta} and Proposition \ref{drhohalf}.
\end{proof}

Next, we control norms of $Du$ in $L^\infty(0,T;L^{p'}(\Rr^d))$.

\begin{Lemma}\label{lt2}
Let $d>2$ and assume that $a$ and $c$ satisfy
\begin{equation}\label{te1}
a\geq \frac{c d}{c-2}\,\,\,\mbox{and}\,\,\,c>2.
\end{equation}Then, there exist $\tilde{s},\,b\,>\,1$ and $0<\lambda<1$ such that 
\begin{equation}\label{eq6}
\frac{1}{c}+\frac{1}{\tilde{s}}=\frac{1}{2},
\end{equation} 
\begin{equation}\label{eq7}
\frac{1}{a}+\frac{1}{b}=\frac{1}{2},
\end{equation}
\begin{equation}\label{eqlambda}
\frac{2}{b}=1-\lambda+\frac{2\lambda}{2^*},
\end{equation}
and
\begin{equation}\label{eq8}
\frac{\tilde{s}\lambda}{2}\leq 1,
\end{equation}are mutually satisfied.
\end{Lemma}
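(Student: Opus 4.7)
The proof will be a direct computation: the four equations determine $\tilde{s}$, $b$, $\lambda$ explicitly in terms of $a$, $c$, $d$, and the hypothesis \eqref{te1} is tailored to ensure that all positivity and range constraints are satisfied, with the inequality \eqref{eq8} becoming an equality at equality in $a\geq\frac{cd}{c-2}$.

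The plan is to solve the system in the natural order. First, from \eqref{eq6} and the hypothesis $c>2$, I would set $\tilde{s}=\frac{2c}{c-2}$, which is automatically greater than $1$. Next, from \eqref{eq7}, I would set $b=\frac{2a}{a-2}$; since $a\geq\frac{cd}{c-2}>d>2$, this $b$ is well-defined and satisfies $b>2>1$. Then I would turn to \eqref{eqlambda}, which, after substituting $2^*=\frac{2d}{d-2}$, rearranges to
\[
\lambda\,=\,\frac{d}{2}\cdot\frac{b-2}{b}.
\]
Plugging in $b=\frac{2a}{a-2}$, a short calculation gives $\frac{b-2}{b}=\frac{2}{a}$, so
\[
\lambda\,=\,\frac{d}{a}.
\]

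With these explicit values in hand, the remaining task is to verify $0<\lambda<1$ and \eqref{eq8}. The positivity $\lambda>0$ is immediate since $a>0$, and $\lambda<1$ is equivalent to $a>d$; but the hypothesis gives $a\geq\frac{cd}{c-2}>d$ because $\frac{c}{c-2}>1$ for $c>2$. Finally, the constraint \eqref{eq8} becomes
\[
\frac{\tilde{s}\lambda}{2}\,=\,\frac{1}{2}\cdot\frac{2c}{c-2}\cdot\frac{d}{a}\,=\,\frac{cd}{a(c-2)}\,\leq\,1,
\]
which is exactly equivalent to the assumption $a\geq\frac{cd}{c-2}$ in \eqref{te1}.

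There is really no obstacle here beyond bookkeeping: the lemma is an algebraic compatibility check, and the content is that the two assumptions in \eqref{te1} are precisely what makes the four-parameter system solvable with all parameters in the admissible range. The only mildly delicate point is noticing the simplification $\frac{b-2}{b}=\frac{2}{a}$, which cleanly yields $\lambda=d/a$ and turns the final constraint into the hypothesis.
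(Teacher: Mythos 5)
Your proof is correct and is exactly the "elementary computation" the paper merely asserts (and delegates to \emph{Mathematica}) without writing out: solving \eqref{eq6}--\eqref{eqlambda} for $\tilde{s}=\tfrac{2c}{c-2}$, $b=\tfrac{2a}{a-2}$, $\lambda=\tfrac{d}{a}$, and observing that \eqref{eq8} reduces precisely to $a\geq\tfrac{cd}{c-2}$. All the range checks ($\tilde{s},b>1$, $0<\lambda<1$) are verified correctly, so your write-up in fact supplies the detail the paper omits.
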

{\bf Remark:}
Note that if $a$ and $c$ satisfy \eqref{te1} then $a>1$ and $c>1$, therefore the requirements of Lemma
\ref{lt1} hold. 

\begin{proof}
As before, the result is established by various elementary computations and can be checked by using the software \textit{Mathematica}.
\end{proof}

\begin{Proposition}\label{dulprd}
Let $(u,m)$ be a solution to \eqref{mfg} and assume that Assumptions A\ref{ham}-A\ref{dxh} are satisfied. Let $\zeta$ solve \eqref{adjlnurdws} and suppose that $a$ and $c$ satisfy \eqref{te1}. Let $a'$ and $c'$ be 
given by \eqref{eq1} and \eqref{eq4}. 
Then,
\begin{align*}
\left|\int_{\Rr^d}u_\xi(x,t)\phi(x)dx\right|&\leq C+C\left\|g\right\|_{L^c(0,T;L^{a}(\Rr^d))}\left(1+\left\|\zeta\right\|_{L^{c'}(0,T;L^{a'}(\Rr^d))}\right)\\
&\quad+C\left\|g\right\|_{L^c(0,T;L^{a}(\Rr^d))}^\frac{3\tilde{s}+2}{2\tilde{s}}\left(1+\left\|\zeta\right\|_{L^{c'}(0,T;L^{a'}(\Rr^d))}^{\frac{2+\tilde{s}}{2\tilde{s}}}\right),
\end{align*}where $u_\xi$ is the derivative of $u$ with respect to the spatial direction $\xi$.
\end{Proposition}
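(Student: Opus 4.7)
The plan is to apply the non-linear adjoint method to the $\xi$-derivative of the Hamilton--Jacobi equation in \eqref{mfg} and then to estimate each resulting term using the preliminary results of Section \ref{sobolev}. Differentiating the first equation of \eqref{mfg} with respect to $x_\xi$ produces
\[
-u_{\xi t}+D_{x_\xi}H+D_pH\cdot Du_\xi=\Delta u_\xi+\partial_\xi g(m).
\]
Testing this identity against a solution $\zeta$ of the adjoint equation \eqref{adjlnurdws}, computing $\tfrac{d}{dt}\int_{\Rr^d}u_\xi\zeta\,dx$, and observing that the transport and diffusion cross-terms cancel after integration by parts in $x$, I obtain
\[
\int_{\Rr^d}u_\xi(x,\tau)\phi\,dx=\int_{\Rr^d}\partial_\xi u_T\,\zeta(x,T)\,dx-\int_\tau^T\!\!\int_{\Rr^d}D_{x_\xi}H\,\zeta\,dxdt-\int_\tau^T\!\!\int_{\Rr^d}g(m)\,\zeta_\xi\,dxdt,
\]
after moving the $\xi$-derivative from $g$ onto $\zeta$ by one further spatial integration by parts.

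The first two right-hand terms are handled by the preceding material. The boundary contribution at time $T$ is bounded by a constant, since $u_T$ is smooth and \eqref{estrela} gives $\|\zeta(\cdot,T)\|_{L^1}\le C$. For the $D_{x_\xi}H$ term, Assumption A\ref{dxh} yields $|D_{x_\xi}H|\le C+CH$, so Lemma \ref{lemmahzeta} directly produces a bound of the form $C+C\|g\|_{L^c(L^a)}(1+\|\zeta\|_{L^{c'}(L^{a'})})$, matching the linear term in the claim.

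The substantive work is in controlling the last term. I write $\zeta_\xi=2\zeta^{1/2}(\zeta^{1/2})_\xi$ and apply the spatial H\"older inequality with the triple $(a,b,2)$, where $b$ is supplied by \eqref{eq7}, followed by the Gagliardo--Nirenberg interpolation
\[
\|\zeta^{1/2}\|_{L^b(\Rr^d)}\le C\|\zeta^{1/2}\|_{L^2}^{1-\lambda}\|D\zeta^{1/2}\|_{L^2}^{\lambda},
\]
with $\lambda$ fixed by \eqref{eqlambda}. Using $\|\zeta^{1/2}\|_{L^2}^2=\|\zeta\|_{L^1}\le C$ from \eqref{estrela}, this yields the pointwise-in-time estimate
\[
\int_{\Rr^d}|g(m)\,\zeta_\xi|\,dx\le C\|g\|_{L^a}\|D\zeta^{1/2}\|_{L^2}^{1+\lambda}.
\]
Integrating in time with the H\"older triple $(c,\tilde s,2)$ from \eqref{eq6}, splitting $1+\lambda=\lambda+1$ and invoking \eqref{eq8} together with one more H\"older step to dominate the $L^{\tilde s}$-norm in time of $\|D\zeta^{1/2}\|_{L^2}^{\lambda}$ by $\|D\zeta^{1/2}\|_{L^2(L^2)}^{\lambda}$ (up to $T$-factors), I arrive at
\[
\int_\tau^T\!\!\int_{\Rr^d}|g(m)\,\zeta_\xi|\,dxdt\le C\|g\|_{L^c(L^a)}\|D\zeta^{1/2}\|_{L^2(L^2)}^{1+\lambda}.
\]
Raising the entropy-dissipation bound of Corollary \ref{hzeta} to the power $(1+\lambda)/2$, multiplying by $\|g\|_{L^c(L^a)}$, and using the elementary inequality $X^{(3+\lambda)/2}\le 1+X^{(3\tilde s+2)/(2\tilde s)}$ (valid because \eqref{te1} forces $\lambda\le 2/\tilde s$, hence $(3+\lambda)/2\le(3\tilde s+2)/(2\tilde s)$, and similarly for $(1+\lambda)/2\le(\tilde s+2)/(2\tilde s)$), I recover the nonlinear term in the stated estimate.

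The main obstacle is the simultaneous compatibility of the spatial H\"older, Gagliardo--Nirenberg, and temporal H\"older exponents, which is exactly what Lemma \ref{lt2} delivers under condition \eqref{te1}. Once those four relations are available, the argument reduces to a careful chaining of the inequalities above with the entropy-dissipation output of Proposition \ref{drhohalf} encapsulated in Corollary \ref{hzeta}.
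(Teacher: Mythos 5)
Your proposal is correct and follows essentially the same route as the paper: the adjoint pairing of the $\xi$-differentiated Hamilton--Jacobi equation with $\zeta$, Assumption A\ref{dxh} plus Lemma \ref{lemmahzeta} for the $D_{x_\xi}H$ term, and for the $g$ term the factorization $\zeta_\xi=2\zeta^{1/2}D\zeta^{1/2}$ combined with the H\"older/Gagliardo--Nirenberg exponents of Lemma \ref{lt2} and the entropy-dissipation bound of Corollary \ref{hzeta}. The only cosmetic difference is that you carry the interpolation exponent $\lambda$ to the end and convert it via $\lambda\le 2/\tilde s$, whereas the paper makes that replacement earlier when bounding $\bigl\|\zeta^{1/2}\bigr\|_{L^{\tilde s}(0,T;L^b)}$.
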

\begin{proof}
We start by fixing a unit vector $\xi$. We differentiate \eqref{mfg} in the $\xi$ direction, multiply it by $\zeta$ and \eqref{adjlnurdws} by $u_\xi$. By adding the
resulting identities, and integrating by parts, one obtains
$$\left|\int_{\Rr^d}u_\xi(x,\tau)\phi(x)dx\right|=\left|\int_\tau^T\int_{\Rr^d}D_\xi H\zeta+g_\xi\zeta dxdt+\int_{\Rr^d}u_{\zeta}(x,T)\zeta(x,T)dx.\right|$$ Because of A\ref{dxh} and Lemma \ref{lemmahzeta}, it follows that
\begin{align*}
\left|\int_{\Rr^d}u_\xi(x,t)\phi(x)\right|&\leq C+C\left\|g\right\|_{L^c(0,T;L^{a}(\Rr^d))}\left(1+\left\|\zeta\right\|_{L^{c'}(0,T;L^{a'}(\Rr^d))}\right)\\\nonumber&\quad+\left|\int_\tau^T\int_{\Rr^d}g_\xi\zeta dxdt\right|.
\end{align*}
It remains to address the term $$\left|\int_\tau^T\int_{\Rr^d}g_\xi\zeta dxdt\right|.$$ 

Before we proceed, choose $b$, $\tilde s$ and $\lambda$ as in Lemma \ref{lt2} so that conditions \eqref{eq6}-\eqref{eq8} are mutually satisfied. As a consequence, we have
$$\left|\int_\tau^T\int_{\Rr^d}g_\xi\zeta dxdt\right|\leq C\left\|g\right\|_{L^c(0,T;L^{a}(\Rr^d))}\left\|\zeta^\frac{1}{2}\right\|_{L^{\tilde{s}}(0,T;L^{b}(\Rr^d))}\left\|D\zeta^\frac{1}{2}\right\|_{L^2(0,T;L^{2}(\Rr^d))}.$$
Corollary \ref{hzeta} controls $\left\|D\zeta^\frac{1}{2}\right\|_{L^2(0,T;L^{2}(\Rr^d))}$ in terms of norms of $g$ and $\zeta$. We investigate next upper bounds for $$\left\|\zeta^\frac{1}{2}\right\|_{L^{\tilde{s}}(0,T;L^{b}(\Rr^d))}=\left[\int_\tau^T\left(\int_{\Rr^d}\zeta^\frac{b}{2}\right)^\frac{\tilde{s}}{b}\right]^\frac{1}{\tilde{s}}.$$
H\"older's inequality yields
$$\left(\int_{\Rr^d}\zeta^\frac{b}{2}\right)^\frac{2}{b}\leq \left(\int_{\Rr^d}\zeta\right)^{1-\lambda}\left(\int_{\Rr^d}\zeta^{\frac{2^*}{2}}\right)^\frac{2\lambda}{2^*},$$
once condition \eqref{eqlambda} holds.

We proceed by addressing $$\left(\int_{\Rr^d}\zeta^{\frac{2^*}{2}}\right)^\frac{2\lambda}{2^*}.$$ Since $\zeta\in L^1(\Rr^d)$, Gagliardo-Nirenberg inequality yields $$\left(\int_{\Rr^d}\zeta^\frac{2^*}{2}dx\right)^\frac{2\lambda}{2^*}\leq C+C\left(\int_{\Rr^d}|D\zeta^\frac{1}{2}|^2dx\right)^\lambda.$$ Therefore, because of \eqref{eq8}, 
we have
$$\left\|\zeta^\frac{1}{2}\right\|_{L^{\tilde{s}}(0,T;L^b(\Rr^d))}\leq C+C\left\|D\zeta^\frac{1}{2}\right\|^\frac{2}{\tilde{s}}_{L^2(\Rr^d\times\left[0,T\right])}.$$ Hence,
\begin{align*}
\left|\int_\tau^T\int_{\Rr^d}g_\xi\zeta dxdt\right|&\leq C\left\|g\right\|_{L^c(0,T;L^{a}(\Rr^d))}\left(C+\left\|D\zeta^\frac{1}{2}\right\|_{L^2(0,T;L^{2}(\Rr^d))}^\frac{2}{\tilde{s}}\right)\left\|D\zeta^\frac{1}{2}\right\|_{L^2(0,T;L^{2}(\Rr^d))}\\&\leq C\left\|g\right\|_{L^c(0,T;L^{a}(\Rr^d))}\left(C+\left\|D\zeta^\frac{1}{2}\right\|_{L^2(0,T;L^{2}(\Rr^d))}^\frac{2+\tilde{s}}{\tilde{s}}\right)\\&\leq C+C\left\|g\right\|_{L^c(0,T;L^{a}(\Rr^d))}^\frac{3\tilde{s}+2}{2\tilde{s}}\left(1+\left\|\zeta\right\|_{L^{c'}(0,T;L^{a'}(\Rr^d))}^\frac{2+\tilde{s}}{2\tilde{s}}\right).
\end{align*}Lastly, 
\begin{align*}
\left|\int_{\Rr^d}u_\xi(x,t)\phi(x)dx\right|&\leq C+C\left\|g\right\|_{L^c(0,T;L^{a}(\Rr^d))}(1+\left\|\zeta\right\|_{L^{c'}(0,T;L^{a'}(\Rr^d))})\\&\quad+C\left\|g\right\|_{L^c(0,T;L^{a}(\Rr^d))}^\frac{3\tilde{s}+2}{2\tilde{s}}\left(1+\left\|\zeta\right\|_{L^{c'}(0,T;L^{a'}(\Rr^d))}^\frac{2+\tilde{s}}{2\tilde{s}}\right),
\end{align*}which concludes the proof.
\end{proof}

\begin{Lemma}\label{lt3}
Let $d>2$ and assume that $a$ and $c$ satisfy \eqref{te1}.
Let $a'$ and $c'$ be given by \eqref{eq1} and \eqref{eq4}. 
Then, there exists $P,\,Q>1$, $M>c'$, and $0<\beta, \kappa<1$ such that 
\begin{equation}\label{eq9}
\frac{1}{M}=\frac{\beta}{P},
\end{equation}
\begin{equation}\label{eq10}
\frac{1}{a'}=1-\beta+\frac{\beta}{Q},
\end{equation}
\begin{equation}\label{eq11}
\frac{1}{Q}=1-\kappa+\frac{2\kappa}{2^*},
\end{equation}
and
\begin{equation}\label{eq12}
\kappa P\leq 1.
\end{equation}.
\end{Lemma}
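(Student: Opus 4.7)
The plan is to build all parameters explicitly in terms of two free variables and then verify the inequalities using only the hypothesis $a \geq cd/(c-2)$ with $c>2$. First, I would use $2^{*}=2d/(d-2)$ to rewrite \eqref{eq11} as
\[
\frac{1}{Q} = 1 - \frac{2\kappa}{d},
\]
so that \eqref{eq10} becomes $\beta(1-1/Q) = 1/a$, i.e. the basic coupling $\beta\kappa = d/(2a)$. Equation \eqref{eq9} then fixes $M = P/\beta$, and \eqref{eq12} demands $\kappa P \le 1$. So the only essentially free choice is, say, $\beta$; everything else is then determined (with $P$ to be chosen subject to $\kappa P \le 1$).

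I would take the simple ansatz $\beta = 1/2$, which forces $\kappa = d/a$, and then saturate \eqref{eq12} by setting $P = 1/\kappa = a/d$, yielding $Q = a/(a-2)$ and $M = P/\beta = 2a/d$. The verifications are then one-line inequalities: from $a \ge cd/(c-2) > d$ (the strict bound coming from $c/(c-2)>1$ since $c>2$) one gets $0<\kappa<1$ and $P>1$; $Q>1$ follows from $a>2$; and $\kappa P = 1$ by construction.

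The only inequality that requires the full strength of hypothesis \eqref{te1} is $M > c'$. I would check it as
\[
M \;=\; \frac{2a}{d} \;\ge\; \frac{2c}{c-2} \;>\; \frac{c}{c-1} \;=\; c',
\]
where the first inequality uses $a \ge cd/(c-2)$ and the second is equivalent to $2(c-1) > c-2$, i.e. $c > 0$, which holds. This closes all the constraints simultaneously.

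There is no real obstacle in this lemma: it is a parameter-existence statement whose only role is to guarantee the exponents used in the next Proposition are admissible, and the hypothesis \eqref{te1} has been engineered precisely so that the strict inequality $M>c'$ survives. The only mildly delicate point is to notice that the coupling $\beta\kappa = d/(2a)$ forces $M = P/\beta \le 1/(\kappa\beta) = 2a/d$, so that one \emph{must} use \eqref{te1} (not just $c>2$ or $a>d$) to obtain $M>c'$; any admissible choice of $(\beta,\kappa,P)$ leads to the same threshold.
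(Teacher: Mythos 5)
Your construction is correct and, in fact, supplies the explicit computation that the paper omits entirely: the published ``proof'' of this lemma consists of the single sentence that the result follows from elementary computations verifiable in \emph{Mathematica}. Your reduction of \eqref{eq10}--\eqref{eq11} to the coupling $\beta\kappa=d/(2a)$, the choice $\beta=1/2$, $\kappa=d/a$, $P=a/d$, $Q=a/(a-2)$, $M=2a/d$, and the verification chain $a\geq cd/(c-2)>d$ all check out, so this is a valid (and more informative) substitute for the paper's argument.

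One small inaccuracy in your closing remark: the full strength of \eqref{te1} is \emph{not} actually needed for $M>c'$. Since $c>2$ forces $c'=c/(c-1)<2$, while $a>d$ already gives $M=2a/d>2$, the inequality $M>c'$ follows from $a>d$ and $c>2$ alone; your chain $2a/d\geq 2c/(c-2)>c/(c-1)$ is valid but routes through \eqref{te1} unnecessarily. This does not affect the correctness of the construction, only the claim about where \eqref{te1} is genuinely indispensable.
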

\begin{proof}
The result follows from simple computations. It can be checked by using the software \textit{Mathematica}.
\end{proof}

\begin{Corollary}\label{cor1}
Let $(u,m)$ solve \eqref{mfg} and assume that Assumptions A\ref{ham}-A\ref{dxh} are satisfied. Suppose that
$a$ and $c$ satisfy \eqref{te1}.
Then, there exists $C>0$ for which 
$$\int_{\Rr^d}u_\xi(x,\tau)\phi(x)dx\leq C+C\left\|g\right\|_{L^c(0,T;L^{a}(\Rr^d))}^{\theta_1}+C\left\|g\right\|_{L^c(0,T;L^{a}(\Rr^d))}^{\theta_2},$$where $$\theta_1=\frac{1}{1-\kappa\beta},$$and $$\theta_2=\frac{3\tilde{s}+2}{2\tilde{s}}+\frac{\kappa\be(2+\tilde{s})}{2\tilde{s}(1-\kappa\be)}.$$
\end{Corollary}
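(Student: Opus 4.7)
The starting point is Proposition \ref{dulprd}, which already bounds $\left|\int_{\Rr^d}u_\xi(x,\tau)\phi(x)dx\right|$ in terms of both $\|g\|_{L^c(0,T;L^a(\Rr^d))}$ and $\|\zeta\|_{L^{c'}(0,T;L^{a'}(\Rr^d))}$. The only missing ingredient is a self-improving estimate that eliminates the dependence on $\zeta$ and expresses $\|\zeta\|_{L^{c'}(L^{a'})}$ purely in terms of $\|g\|_{L^c(L^a)}$. The plan is to combine Corollary \ref{hzeta}, which controls $\|D\zeta^{1/2}\|_{L^2(0,T;L^2(\Rr^d))}$ by $\|g\|$ and $\|\zeta\|$, with the interpolation exponents furnished by Lemma \ref{lt3}, and then to close via Young's inequality.

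The key intermediate bound is
\begin{equation*}
\|\zeta\|_{L^{c'}(0,T;L^{a'}(\Rr^d))}\leq C\left(\int_0^T\int_{\Rr^d}\left|D\zeta^{\frac{1}{2}}\right|^2dxdt\right)^{\kappa\beta}.
\end{equation*}
Since $M>c'$ and $T$ is finite, H\"older in time first gives $\|\zeta\|_{L^{c'}(L^{a'})}\leq C\|\zeta\|_{L^{M}(L^{a'})}$. For each fixed $t$, the Lebesgue interpolation identity \eqref{eq10} together with the uniform $L^1$ bound \eqref{estrela} yields $\|\zeta(\cdot,t)\|_{L^{a'}}\leq C\|\zeta(\cdot,t)\|_{L^{Q}}^{\beta}$. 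Writing $\|\zeta\|_{L^{Q}}=\|\zeta^{1/2}\|_{L^{2Q}}^{2}$, the Gagliardo-Nirenberg inequality applied to $\zeta^{1/2}$ (whose exponent matches precisely \eqref{eq11}) combined with $\|\zeta^{1/2}\|_{L^2}^2=\|\zeta\|_{L^1}\leq C$ produces $\|\zeta(\cdot,t)\|_{L^{Q}}\leq C\|D\zeta^{1/2}(\cdot,t)\|_{L^2}^{2\kappa}$. Raising to the $M$-th power and integrating in $t$ yields a time integral of $\|D\zeta^{1/2}\|_{L^2}^{2\kappa M\beta}$; by \eqref{eq9} this exponent equals $2\kappa P$, and since $\kappa P\leq 1$ by \eqref{eq12}, a second H\"older in time reduces it to the displayed power of $\int_0^T\int_{\Rr^d}|D\zeta^{1/2}|^2$.

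Plugging Corollary \ref{hzeta} into the right-hand side of the key bound gives
\begin{equation*}
\|\zeta\|_{L^{c'}(L^{a'})}\leq C+C\|g\|_{L^c(L^a)}^{\kappa\beta}\left(1+\|\zeta\|_{L^{c'}(L^{a'})}^{\kappa\beta}\right).
\end{equation*}
Since $\kappa\beta<1$, Young's inequality with conjugate exponents $1/(\kappa\beta)$ and $1/(1-\kappa\beta)$ absorbs the $\|\zeta\|^{\kappa\beta}$ term, producing the self-improved bound
\begin{equation*}
\|\zeta\|_{L^{c'}(0,T;L^{a'}(\Rr^d))}\leq C+C\|g\|_{L^c(0,T;L^a(\Rr^d))}^{\kappa\beta/(1-\kappa\beta)}.
\end{equation*}
Substituting this into Proposition \ref{dulprd}, the cross term $\|g\|\cdot\|\zeta\|$ produces the exponent $1+\kappa\beta/(1-\kappa\beta)=1/(1-\kappa\beta)=\theta_1$, while $\|g\|^{(3\tilde{s}+2)/(2\tilde{s})}\cdot\|\zeta\|^{(2+\tilde{s})/(2\tilde{s})}$ produces $(3\tilde{s}+2)/(2\tilde{s})+\kappa\beta(2+\tilde{s})/(2\tilde{s}(1-\kappa\beta))=\theta_2$. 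The remaining pure-$\|g\|$ terms have smaller exponents and are absorbed into $C+C\|g\|^{\theta_1}+C\|g\|^{\theta_2}$, yielding the stated estimate. The main difficulty is purely bookkeeping: Lemma \ref{lt3} is engineered so that the four successive steps---H\"older in time, Lebesgue interpolation in space, Gagliardo-Nirenberg, and a second H\"older in time---close without loss, and Young's inequality then resolves the implicit dependence on $\|\zeta\|$.
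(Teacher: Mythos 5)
Your proposal is correct and follows essentially the same route as the paper: Hölder in time to pass from $L^{c'}$ to $L^{M}$, interpolation in space via \eqref{eq10}, Gagliardo--Nirenberg via \eqref{eq11}, a second Hölder in time via \eqref{eq12}, then Corollary \ref{hzeta} and Young's inequality to close the self-referential bound on $\left\|\zeta\right\|_{L^{c'}(0,T;L^{a'}(\Rr^d))}$ before substituting into Proposition \ref{dulprd}. The only cosmetic difference is that you apply Gagliardo--Nirenberg directly to $\zeta^{1/2}$ in $L^{2Q}$ rather than first interpolating $L^{Q}$ between $L^{1}$ and $L^{2^{*}/2}$, and your ``key intermediate bound'' display omits the additive constant that you correctly restore in the next step; neither affects the argument.
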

\begin{proof}
Let $M,P,Q, \beta, \kappa$ as in 
 Lemma \ref{lt3} so that conditions \eqref{eq9}-\eqref{eq12} are simultaneously satisfied. Then,
H\"older's inequality implies
\begin{align*}
\left\|\zeta\right\|_{L^{c'}(0,T;L^{a'}(\Rr^d))}&\leq C\left\|\zeta\right\|_{L^M(0,T;L^{a'}(\Rr^d))}\\&\leq C\left\|\zeta\right\|_{L^\infty(0,T;L^1(\Rr^d))}^{1-\beta}\left\|\zeta\right\|^\beta_{L^P(0,T;L^Q(\Rr^d))},
\end{align*}since \eqref{eq9} and \eqref{eq10} hold.
Because of \eqref{eq11},
we also have
\begin{align*}
\left(\int_{\Rr^d}\zeta^{Q}dx\right)^\frac{1}{Q}&\leq \left(\int_{\Rr^d}\zeta dx\right)^{1-\kappa}\left(\int_{\Rr^d}\zeta^\frac{2^*}{2}dx\right)^\frac{2\kappa}{2^*}\\&\leq C+ C\left(\int_{\Rr^d}\left|D\zeta^\frac{1}{2}\right|^2dx\right)^\kappa,
\end{align*}where the last inequality follows from the Gagliardo-Nirenberg Theorem. By choosing $P$ according to \eqref{eq12}, it follows that
\begin{align*}
\left[\int_0^T\left(\int_{\Rr^d}\zeta^{Q}dx\right)^\frac{P}{Q}dt\right]^\frac{1}{P}&\leq C +C\left[\int_0^T\left(\int_{\Rr^d}\left|D\zeta^\frac{1}{2}\right|^2\right)^{\kappa P}dt\right]^\frac{1}{P}\\&\leq C+C\left[\int_0^T\int_{\Rr^d}\left|D\zeta^\frac{1}{2}\right|^2dxdt\right]^\kappa.
\end{align*}By combining these, we obtain
$$\left\|\zeta\right\|_{L^{c'}(0,T;L^{a'}(\Rr^d))}\leq C+C\left[\left\|g\right\|_{L^c(0,T;L^a(\Rr^d))}\left\|\zeta\right\|_{L^{c'}(0,T;L^{a'}(\Rr^d))}\right]^{\kappa\beta}.$$
Then, 
Young's inequality weighted by $\varepsilon$ yields
$$\left\|\zeta\right\|_{L^{c'}(0,T;L^{a'}(\Rr^d))}\leq C+C\left\|g\right\|_{L^c(0,T;L^a(\Rr^d))}^\frac{\kappa\beta}{1-\kappa\beta}.$$Therefore,
\begin{align*}
\int_{\Rr^d}u_\xi(x,\tau)\phi(x)dx&\leq C+C\left\|g\right\|_{L^c(0,T;L^a(\Rr^d))}^{\frac{1}{1-\kappa\be}}+C\left\|g\right\|_{L^c(0,T;L^a(\Rr^d))}^{\frac{3\tilde{s}+2}{2\tilde{s}}+\frac{\kappa\be(2+\tilde{s})}{2\tilde{s}(1-\kappa\be)}},
\end{align*}which finishes the proof.
\end{proof}

We present the proof of Proposition \ref{proposition1}:
\begin{proof}[Proof of Proposition \ref{proposition1}.]

Proposition \ref{proposition1} follows from Corollary \ref{cor1} by considering the supremum, firstly with respect to $\phi$ and then with respect to $\tau\in[0,T]$. 
\end{proof}
We end now with the proof of  Theorem \ref{mainresult}. 

\begin{Lemma}\label{lt4}
Let $d>2$ and assume that
\begin{equation}\label{te2}
0\,<\,\alpha\,\leq\,\frac{1}{d-1}.
\end{equation}Then, there exist $a$ and $c$ satisfy \eqref{te1} and
\begin{equation}
\label{condalpha}
\alpha c \leq \alpha +1 \,\,\,\mbox{and}\,\,\,\, \alpha a =\frac{2^*(\alpha +1)}{2}.
\end{equation}
\end{Lemma}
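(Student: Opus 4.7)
The plan is to treat $a$ as forced by \eqref{condalpha} and then show that the remaining constraints on $c$ are mutually compatible precisely when $\alpha \leq 1/(d-1)$. First I would use the equality in \eqref{condalpha} to solve
\[
a=\frac{2^{*}(\alpha+1)}{2\alpha}=\frac{d(\alpha+1)}{\alpha(d-2)},
\]
so that $a$ is uniquely determined by $\alpha$. The three remaining constraints to verify are: (i) $c>2$, (ii) $c\leq(\alpha+1)/\alpha$ (the first half of \eqref{condalpha}), and (iii) $a\geq cd/(c-2)$ (the first half of \eqref{te1}).

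Next I would rewrite (iii) as an explicit inequality on $c$. Substituting the value of $a$ into $a\geq cd/(c-2)$ and clearing denominators (both sides are positive since $c>2$ and $\alpha>0$), a short manipulation gives
\[
c\bigl[\alpha(d-3)-1\bigr]\;\leq\;-2(\alpha+1).
\]
Under the standing hypothesis $\alpha\leq 1/(d-1)$, one has $\alpha(d-3)\leq (d-3)/(d-1)<1$, so the bracket is strictly negative and the inequality flips upon division, yielding the lower bound
\[
c\;\geq\;c_{0}\;:=\;\frac{2(\alpha+1)}{1-\alpha(d-3)}.
\]
Thus the admissible set for $c$ is exactly $[c_{0},(\alpha+1)/\alpha]\cap(2,\infty)$.

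Finally I would verify that this interval is non-empty. The inequality $c_{0}>2$ reduces, after cross-multiplying, to $\alpha(d-2)>0$, which holds since $d>2$ and $\alpha>0$. The inequality $c_{0}\leq(\alpha+1)/\alpha$ reduces, after multiplying out, to
\[
2\alpha\;\leq\;1-\alpha(d-3)\;\Longleftrightarrow\;\alpha(d-1)\leq 1,
\]
which is exactly the hypothesis \eqref{te2}. Hence choosing any $c\in[c_{0},(\alpha+1)/\alpha]$, for instance $c=c_{0}$, together with the value of $a$ above, satisfies \eqref{te1} and \eqref{condalpha} simultaneously. The only delicate point is the last equivalence, where the threshold $\alpha=1/(d-1)$ appears naturally and shows that this is the sharpest range for which all the exponent conditions used in Proposition \ref{dulprd} and Corollary \ref{cor1} can be satisfied at once.
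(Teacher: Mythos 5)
Your proof is correct, and in fact it supplies more than the paper does: the paper's own ``proof'' of Lemma \ref{lt4} consists of the single sentence that the result follows from elementary computations verifiable in \emph{Mathematica}, with no details given. Your computation is the right one. Solving the equality in \eqref{condalpha} for $a=d(\alpha+1)/(\alpha(d-2))$, substituting into $a\geq cd/(c-2)$, and clearing the (positive) denominators does reduce to $c\left[1-\alpha(d-3)\right]\geq 2(\alpha+1)$, the bracket is positive under \eqref{te2}, and the two checks $c_0>2\Leftrightarrow\alpha(d-2)>0$ and $c_0\leq(\alpha+1)/\alpha\Leftrightarrow\alpha(d-1)\leq 1$ are exactly right; the second shows the hypothesis \eqref{te2} is not only sufficient but necessary for the interval $[c_0,(\alpha+1)/\alpha]$ to be non-empty, i.e., the exponent range is sharp for this choice of $a$. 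Taking $c=c_0$ (which satisfies $c>2$ strictly) closes the argument. The only cosmetic remark is that you could note explicitly that $2^*(\alpha+1)/2=d(\alpha+1)/(d-2)$ when unpacking $a$, but that is immediate from the definition of $2^*$.
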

\begin{proof}
Once more, the proof relies on elementary computations ans can be checked by recurring to the software \textit{Mathematica}.
\end{proof}

\begin{proof}[Proof of Theorem \ref{mainresult}]
 Since A\ref{alpha} holds, 
Lemma \ref{lt4} ensures the exists of $a$ and $c$ 
satisfying \eqref{te1} and \eqref{condalpha}.
Then, by Lemma \ref{lemma2}, 
we have $g(m)\in L^c([0,T], L^a(\Tt^d))$.
 Thus we can apply Proposition \eqref{proposition1} to obtain the estimate in the Theorem.
 \end{proof}


\bibliography{mfg}

\begin{thebibliography}{10}

\bibitem{achdou2013finite}
Y.~Achdou.
\newblock Finite difference methods for mean field games.
\newblock In {\em Hamilton-Jacobi Equations: Approximations, Numerical Analysis
  and Applications}, pages 1--47. Springer, 2013.

\bibitem{MR2928378}
M.~Bardi.
\newblock Explicit solutions of some linear-quadratic mean field games.
\newblock {\em Netw. Heterog. Media}, 7(2):243--261, 2012.

\bibitem{bensoussan}
A.~Bensoussan, J.~Frehse, and P.~Yam.
\newblock {\em Mean field games and mean field type control theory}.
\newblock Springer Briefs in Mathematics. Springer, New York, 2013.

\bibitem{cardaliaguet}
P.~Cardaliaguet.
\newblock Notes on mean-field games.
\newblock 2011.

\bibitem{cgbt}
P.~Cardaliaguet, P.~Garber, A.~Porretta, and D.~Tonon.
\newblock Second order mean field games with degenerate diffusion and local
  coupling.
\newblock {\em Preprint}, 2014.

\bibitem{CLLP}
P.~Cardaliaguet, J.-M. Lasry, P.-L. Lions, and A.~Porretta.
\newblock Long time average of mean field games.
\newblock {\em Netw. Heterog. Media}, 7(2):279--301, 2012.

\bibitem{GIMY}
D.~Gomes, R.~Iturriaga, H.~S{\'a}nchez-Morgado, and Y.~Yu.
\newblock Mather measures selected by an approximation scheme.
\newblock {\em Proc. Amer. Math. Soc.}, 138(10):3591--3601, 2010.

\bibitem{GMit}
D.~Gomes and H.~Mitake.
\newblock Stationary mean-field games with congestion and quadratic
  hamiltonians.
\newblock {\em Preprint}.

\bibitem{GPat}
D.~Gomes and S.~Patrizi.
\newblock Obstacle mean-field game problem.
\newblock {\em Preprint}, 2013.

\bibitem{GPatVrt}
D.~Gomes, S.~Patrizi, and V.~Voskanyan.
\newblock On the existence of classical solutions for stationary extended mean
  field games.
\newblock {\em Nonlinear Anal.}, 99:49--79, 2014.

\bibitem{GPim2}
D.~Gomes and E.~Pimentel.
\newblock Time dependent mean-field games with logarithmic nonlinearities.
\newblock {\em Preprint}.

\bibitem{GPM3}
D.~Gomes, E.~Pimentel, and H~Sanchez-Morgado.
\newblock Time dependent mean-field games in the superquadratic case.
\newblock {\em Preprint}, 2013.

\bibitem{GPM2}
D.~Gomes, E.~Pimentel, and H~Sanchez-Morgado.
\newblock Time dependent mean-field games in the subquadratic case.
\newblock {\em To appear in Comm. Partial Differential Equations}, 2014.

\bibitem{GM}
D.~Gomes and H.~S{\'a}nchez~Morgado.
\newblock A stochastic {E}vans-{A}ronsson problem.
\newblock {\em Trans. Amer. Math. Soc.}, 366(2):903--929, 2014.

\bibitem{GS}
D.~Gomes and J.~Saude.
\newblock Mean field games models---a brief survey.
\newblock {\em Dyn. Games Appl.}, 4(2):110--154, 2014.

\bibitem{GPM1}
D.~A. Gomes, G.~E. Pires, and H.~S{\'a}nchez-Morgado.
\newblock A-priori estimates for stationary mean-field games.
\newblock {\em Netw. Heterog. Media}, 7(2):303--314, 2012.

\bibitem{C2}
M.~Huang, P.~E. Caines, and R.~P. Malham{\'e}.
\newblock Large-population cost-coupled {LQG} problems with nonuniform agents:
  individual-mass behavior and decentralized {$\epsilon$}-{N}ash equilibria.
\newblock {\em IEEE Trans. Automat. Control}, 52(9):1560--1571, 2007.

\bibitem{C1}
M.~Huang, R.~P. Malham{\'e}, and P.~E. Caines.
\newblock Large population stochastic dynamic games: closed-loop
  {M}c{K}ean-{V}lasov systems and the {N}ash certainty equivalence principle.
\newblock {\em Commun. Inf. Syst.}, 6(3):221--251, 2006.

\bibitem{ll1}
J.-M. Lasry and P.-L. Lions.
\newblock Jeux \`a champ moyen. {I}. {L}e cas stationnaire.
\newblock {\em C. R. Math. Acad. Sci. Paris}, 343(9):619--625, 2006.

\bibitem{ll2}
J.-M. Lasry and P.-L. Lions.
\newblock Jeux \`a champ moyen. {II}. {H}orizon fini et contr\^ole optimal.
\newblock {\em C. R. Math. Acad. Sci. Paris}, 343(10):679--684, 2006.

\bibitem{ll3}
J.-M. Lasry and P.-L. Lions.
\newblock Mean field games.
\newblock {\em Jpn. J. Math.}, 2(1):229--260, 2007.

\bibitem{llg2}
J.-M. Lasry, P.-L. Lions, and O.~Gu{\'e}ant.
\newblock Mean field games and applications.
\newblock {\em Paris-Princeton lectures on Mathematical Finance}, 2010.

\bibitem{LCDF}
P.-L. Lions.
\newblock College de france course on mean-field games.
\newblock 2007-2011.

\bibitem{LIMA}
P.-L. Lions.
\newblock {I}{M}{A}, {U}niversity of {M}inessota. {C}ourse on mean-field games.
  {V}ideo. http://www.ima.umn.edu/2012-2013/sw11.12-13.12/.
\newblock 2012.

\bibitem{porretta}
A.~Porretta.
\newblock On the planning problem for the mean field games system.
\newblock {\em Dyn. Games Appl.}, 4(2):231--256, 2014.

\bibitem{porretta2}
A.~Porretta.
\newblock Weak solutions to {F}okker-{P}lanck equations and mean field games.
\newblock {\em Preprint}, 2014.

\end{thebibliography}
\bibliographystyle{plain}

\end{document}